\begin{document}

\theoremstyle{plain}
\newtheorem{thm}{Theorem}
\newtheorem{lemma}[thm]{Lemma}
\newtheorem{cor}[thm]{Corollary}
\newtheorem{conj}[thm]{Conjecture}
\newtheorem{prop}[thm]{Proposition}
\newtheorem{heur}[thm]{Heuristic}

\theoremstyle{definition}
\newtheorem{defn}[thm]{Definition}
\newtheorem*{ex}{Example}
\newtheorem*{notn}{Notation}

\title[Extensions for Markov Chains]{Comparison Theory for Markov Chains on Different State Spaces and Application to Random Walk on Derangements}
\author{Aaron Smith}

\address{ICERM, Brown University, Providence, RI}
\email{asmith3@math.stanford.edu}
\date{\today}
\maketitle

\section{Abstract}

Let $X_{t}$ and $Y_{t}$ be two Markov chains, on state spaces $\Omega \subset \widehat{\Omega}$. In this paper, we discuss how to prove bounds on the spectrum of $X_{t}$ based on bounds on the spectrum of $Y_{t}$. This generalizes work of Diaconis, Saloff-Coste, Yuen and others on comparison of chains in the case $\Omega = \widehat{\Omega}$. The main tool is the extension of functions from the smaller space to the larger, which allows comparison of the entire spectrum of the two chains. The theory is used to give quick analyses of several chains without symmetry. The main application is to a `random transposition' walk on derangements.

\section{Introduction}
One major tool in the theory of finite state Markov chain has been the comparison technique introduced by Diaconis and Saloff-Coste in the papers \cite{DiSa93} and \cite{DiSa93b}. This theory allows users to analyze the mixing of a Markov chain in terms of the mixing properties of another Markov chain with the same state space, as long as their stationary distributions are not too different. Practically, this may be useful because a chain of interest can be related to a similar but much simpler or more symmetric chain. In many natural examples, however, one expects Markov chains with different state spaces to have similar behaviour. For example, we might expect that removing a small number of vertices at random from a graph would generally have a small impact on the spectral gap of the associated Markov chains. The bounds in \cite{DiSa93} and \cite{DiSa93b} do not apply in this situation. This paper is based on one way to close this gap in the literature, and we demonstrate the usefulness of this approach by deriving new bounds for several natural chains. See \cite{DGJM06} for a useful survey on different ways to apply existing comparison techniques in different contexts. Essentially all of the techniques in that paper apply in the context of different state spaces. \par 
Sections 3 and 4 paper deals with the theory of comparisons for distinct finite state spaces. These bounds are closely related to those found in \cite{DiSa93} and \cite{DiSa93b}. Although the notation is quite different, they are also closely related to ideas found in the papers \cite{DiSa96} and \cite{DiSa98}. Those papers compared random walks on products of groups to some slightly restricted versions, and in particular had the first special examples of comparison of Markov chains with different state spaces. To our knowledge, the only other example is Raymer's thesis \cite{Raym11}. \par 
Sections 5 and 6 apply the bounds obtained in the first part. We begin by looking at random walks on graphs with `some vertices removed' that are analogous to the random walks on graphs with `some edges removed' studied in \cite{DiSa93}. The main application in this paper deals with a random walk on derangements, obtained by comparison with a similar random walk on permutations. This is a simple example of a Markov chain on permutations with restrictions, part of a class first studied for statistical applications in \cite{DGH99}. Closely related chains have been studied with very different spectral methods in \cite{Hanl96} \cite{Blum11b}; the same chain was studied in \cite{Jian12}. \par 
Sections 7-9 of this paper describe some closely related comparison bounds. This begins in section 7 with an analogous bound for discrete-time Markov chains on continuous state spaces, following the work of \cite{Yuen01}. In section 8, we discuss the removal of some technical conditions, such as laziness. Finally, in section 9, we extend the results to another technique, the spectral profile described in \cite{GMT06}. We then use this to sharpen our mixing bound for an earlier example. As proved in \cite{Koz07}, bounds obtained from the spectral profile are almost right for all chains on finite state spaces. Although we don't derive these comparisons explicitly, the discussion in section 9 applies with few changes to many other comparison inequalities based on functional analysis. An excellent survey of such bounds can be found in \cite{Mon07}.

\section{Notation, Background and Statement of Results}

To begin, we consider a $\frac{1}{2}$-lazy, ergodic, irreducable, reversible Markov chain on state space $\widehat{\Omega}$, with transition kernel $K$ and stationary distribution $\mu$ (see section \ref{SecLosingAssumptions} for remarks on obtaining related results under relaxed assumptions) . We will compare this to another  $\frac{1}{2}$-lazy, ergodic, irreducable, reversible Markov chain on state space $\Omega \subset \widehat{\Omega}$, with transition kernel $Q$ and stationary distribution $\nu$. We will begin by comparing Dirichlet forms and Log-Sobolev constants for these two chains. Throughout, we will assume that we have satisfactory information about the chain $K$ on the larger space, and use this to find bounds for the chain $Q$ on the smaller space. \par 
For a general chain on space $X$ with kernel $P$ and stationary distribution $\pi$, and functions $f$ on $X$, we define the following functions:

\begin{align} \label{FunctionalDefs}
V_{\pi}(f) &= \frac{1}{2} \sum_{x,y \in X} \vert f(x) - f(y) \vert^{2} \pi(x) \pi(y) \\
\mathcal{E}_{P}(f,f) &= \frac{1}{2} \sum_{x,y \in X} \vert f(x) - f(y) \vert^{2} P(x,y) \pi(x) \\
L_{\pi}(f) &= \sum_{x \in X} \vert f(x) \vert^{2} \log \left( \frac{f(x)^{2}}{\vert \vert f \vert \vert_{2,\pi}^{2}} \right) \pi(x) \\
\vert \vert f \vert \vert_{2, \pi}^{2} &= \sum_{x \in X} \vert f(x) \vert^{2} \pi(x)
\end{align}

These quantities will be used to describe the spectral gap and log-Sobolev constants of the associated Markov chains. Recall, if $P$ is a reversible, ergodic, irreducible, $\frac{1}{2}$-lazy kernel, it has $\vert X \vert$ real eigenvalues satisfying

\begin{equation*}
1 = \beta_{0}(P) > \beta_{1}(P) \geq \ldots \geq \beta_{\vert X \vert - 1}(P) \geq 0
\end{equation*}

By the variational characterization of eigenvalues, the spectral gap satisfies 

\begin{equation} \label{EqVarCharBeta}
1 - \beta_{1}(P) = \inf_{f \neq 0} \frac{\mathcal{E}_{P}(f,f)}{V_{\pi}(f)}
\end{equation}

As in \cite{DiSa98}, the log-Sobolev constant can similarly be characterized by

\begin{equation} \label{EqVarCharAlpha}
\alpha(P) = \inf_{f \neq 0} \frac{\mathcal{E}_{P}(f,f)}{L_{\pi}(f)}
\end{equation}

Our general approach, when possible, is to use the following theorem (see Theorem 2.2 of \cite{DiSa98}):

\begin{thm}[Mixing Time Bound via Spectral Gap and Log-Sobolev Constant] \label{ThmLogSobolevMixingBound}
For a $\frac{1}{2}$-lazy reversible Markov chain $X_{t}$ started at $X_{0} = x$, and for $t > 1 + \frac{c}{1 - \beta_{1}(P)} + \frac{1}{4 \alpha(P)} \log \log (\frac{1}{\pi(x)})$,
\begin{equation*}
\vert \vert \mathcal{L}(X_{t}) - \pi \vert \vert \leq 2 e^{-c}
\end{equation*}
\end{thm}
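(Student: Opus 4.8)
The plan is to bound total variation by the $L^{2}(\mu)$-distance of the density $K^{t}(x,\cdot)/\mu$ from $1$, split the running time $t = t_{1} + u$ into a short ``burn-in'' phase controlled by the log-Sobolev constant via hypercontractivity and a ``relaxation'' phase controlled by the spectral gap, and then optimize over $t_{1}$. First, by Cauchy--Schwarz,
\[
\| \mathcal{L}(X_{t}) - \mu \|_{\mathrm{TV}} = \tfrac12 \sum_{y} \mu(y)\,\bigl| K^{t}(x,y)/\mu(y) - 1 \bigr| \le \tfrac12 \, \| K^{t}(x,\cdot)/\mu - 1 \|_{2,\mu} =: \tfrac12 \, d_{2}(t).
\]
Since $K$ is reversible it is self-adjoint on $L^{2}(\mu)$, and since it is $\tfrac12$-lazy all its eigenvalues lie in $[0,1]$. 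Writing $h_{x}(y) = \mathbf{1}[y=x]/\mu(x)$, one has $K^{t}h_{x} = K^{t}(x,\cdot)/\mu$ and hence $d_{2}(t)^{2} = \| K^{t}h_{x} \|_{2,\mu}^{2} - 1 = K^{2t}(x,x)/\mu(x) - 1$, while $K^{t_{1}}h_{x} - 1$ has $\mu$-mean zero so that, using $\beta_{1}(K)^{u} \le e^{-u(1-\beta_{1}(K))}$,
\[
d_{2}(t_{1}+u) = \| K^{u}\bigl(K^{t_{1}}h_{x} - 1\bigr)\|_{2,\mu} \le e^{-u(1-\beta_{1}(K))}\,\bigl(\| K^{t_{1}}h_{x} \|_{2,\mu}^{2} - 1\bigr)^{1/2}.
\]
It therefore suffices to show $\| K^{t_{1}}h_{x}\|_{2,\mu}$ is bounded by an absolute constant once $t_{1} \approx \frac{1}{4\alpha(K)}\log\log(1/\mu(x))$.

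That last bound is precisely hypercontractivity. Gross's theorem converts the log-Sobolev inequality $\mathcal{E}_{K}(f,f) \ge \alpha(K) L_{\mu}(f)$ (the characterization \eqref{EqVarCharAlpha}) into the statement that the semigroup $e^{-s(I-K)}$ maps $L^{2}(\mu)$ contractively into $L^{q}(\mu)$ with $q = 1 + e^{4\alpha(K)s}$; dually, it maps $L^{p'}(\mu)$ contractively into $L^{2}(\mu)$ with $p' = 1 + e^{-4\alpha(K)s}$. Since $\| h_{x} \|_{p',\mu}^{p'} = \mu(x)^{1-p'} = \mu(x)^{-e^{-4\alpha(K)s}}$, the choice $s = t_{1}$ with $e^{-4\alpha(K)t_{1}} = 1/\log(1/\mu(x))$, that is $t_{1} = \frac{1}{4\alpha(K)}\log\log(1/\mu(x))$, gives $\| h_{x}\|_{p',\mu} \le e$ and hence $\| K^{t_{1}}h_{x}\|_{2,\mu} \le e$, so $\bigl(\| K^{t_{1}}h_{x}\|_{2,\mu}^{2}-1\bigr)^{1/2} \le \sqrt{e^{2}-1}$. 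Combining with the display above, $\| \mathcal{L}(X_{t}) - \mu \|_{\mathrm{TV}} \le \tfrac12\sqrt{e^{2}-1}\; e^{-(t-t_{1})(1-\beta_{1}(K))} \le 2 e^{-c}$ as soon as $t - t_{1} \ge c/(1-\beta_{1}(K))$, which is guaranteed by the hypothesis $t > 1 + \frac{c}{1-\beta_{1}} + \frac{1}{4\alpha}\log\log(1/\mu(x))$; the extra additive $1$ soaks up the passage between the discrete powers $K^{n}$ and the continuous-time semigroup, the $p'/(1+\ldots)$ exponent rounding, and the factor $\tfrac12\sqrt{e^{2}-1} < 2$.

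The only genuinely delicate ingredient — and where I expect to spend the most care — is feeding hypercontractivity into the \emph{discrete-time, lazy} setting: Gross's theorem is a statement about $e^{-s(I-K)}$, so one must either quote the known discrete hypercontractivity estimate for $\tfrac12$-lazy reversible kernels (laziness is exactly what forces the spectrum into $[0,1]$ and makes such an estimate available) or explicitly dominate $K^{n}$ by $e^{-s(I-K)}$ for a suitable $s \asymp n$, each route costing a small constant that the slack in the statement already absorbs. Everything else is Cauchy--Schwarz, the spectral theorem for the self-adjoint operator $K$ on $L^{2}(\mu)$, and elementary manipulation of the exponents; this is the argument of Theorem 2.2 of \cite{DiSa98}, which one may also simply invoke.
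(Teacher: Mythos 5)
The paper itself does not prove this theorem; it is quoted from Theorem~2.2 of Diaconis and Saloff-Coste \cite{DiSa98}, as your last sentence acknowledges, so there is no ``paper proof'' to compare against. Your reconstruction is essentially the argument of \cite{DiSa98}: bound total variation by the $L^{2}(\pi)$ distance via Cauchy--Schwarz, run a hypercontractive burn-in of length about $\tfrac{1}{4\alpha}\log\log(1/\pi(x))$ to bring $\| K^{t_{1}}h_{x} \|_{2,\pi}$ down to an absolute constant, then let spectral decay finish. The arithmetic checks out: $\| h_{x} \|_{p',\pi}^{p'} = \pi(x)^{1-p'} = e$ at your choice of $t_{1}$, hence $\| h_{x} \|_{p',\pi} \le e$; $\tfrac12\sqrt{e^{2}-1}<2$; the additive $1$ handles rounding $t_{1}$ up to an integer; and the degenerate case $\pi(x)>e^{-1}$ (where $\log\log(1/\pi(x))\le 0$) is handled by taking $t_{1}=0$, since then $d_{2}(0)^{2}=1/\pi(x)-1<e-1$.

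The step you flag as delicate — applying hypercontractivity to the discrete power $K^{t_{1}}$ — is indeed the only real content, and it is not something the slack in the statement absorbs on its own. The result one actually needs, and the one proved in \cite{DiSa98}, is a \emph{discrete-time} hypercontractive estimate for reversible $\tfrac12$-lazy kernels, of the form $\| K^{n}\|_{2\to q}\le 1$ when $q\le 1+e^{4\alpha n}$; it is established by adapting Gross's derivative-of-norms argument, with laziness playing the role of pinning $\mathrm{spec}(K)$ into $[0,1]$ so that the one-step norm increment behaves like the continuous-time one. Your second suggested escape route, ``explicitly dominating $K^{n}$ by $e^{-s(I-K)}$,'' is less clean than it sounds — the discrete and continuous semigroups are not comparable as operators in a norm-to-norm sense, only via averaging identities, and getting the constants right that way is more work than invoking the discrete result. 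So: the outline is correct, and it becomes a complete proof once you replace the invocation of Gross's continuous-time theorem with the discrete-time hypercontractivity theorem of \cite{DiSa98} (or reproduce its proof).
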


When the log-Sobolev constant $\alpha(P)$ is available, this is often better than the usual bound in terms of just the spectral gap (see Theorem 12.3 of \cite{LPW09}), which gives, for $t >  \frac{c}{1 - \beta_{1}(P)} \log(\frac{1}{\pi(x)})$, the bound

\begin{equation} \label{IneqTvSpectralBoundBasic}
\vert \vert \mathcal{L}(X_{t}) - \pi \vert \vert \leq 2 e^{-c}
\end{equation}

We will see shortly that, for many examples, it will be easy to find a very reasonable bound for $\alpha(P)$ after doing the work needed to bound $\beta_{1}(P)$. \par 

It is now time to compare the functionals described in equation \eqref{FunctionalDefs}. For the remainder of this note, $f$ will denote a function on $\Omega$, and $\widehat{f}$ will denote a function on $\widehat{\Omega}$ satisfying $\widehat{f}(x) = f(x)$ for all $x \in \Omega$. We call such a function an \textit{extension} of $f$. We note that the inequalities

\begin{align*}
V_{\nu}(f) &\leq C_{1} V_{\mu}(\widehat{f}) \\
L_{\nu}(f) &\leq C_{2} L_{\mu}(\widehat{f}) \\
\mathcal{E}_{K}(\widehat{f}, \widehat{f}) &\leq C_{3} \mathcal{E}_{Q}(f,f)
\end{align*}

together with the variational characterizations of $\beta_{1}$ and $\alpha$ given in equations \eqref{EqVarCharAlpha} and \eqref{EqVarCharBeta} imply the following bounds on $\beta_{1}(Q)$ and $\alpha(Q)$ in terms of $\beta_{1}(K)$ and $\alpha(K)$:

\begin{align*} 
1 - \beta_{1}(Q) &\geq \frac{1}{C_{1} C_{3}} (1 - \beta_{1}(K)) \\
\alpha(Q) &\geq \frac{1}{C_{2} C_{3}} \alpha(K)
\end{align*}

Finding a good value for $C_{3}$ is difficult and the main object of this paper, but reasonable bounds on $C_{1}$ and $C_{2}$ can be found immediately. The following lemma is useful when $\mu$ and $\nu$ assign similar values to all points in $\Omega$, which is the case for many natural examples. 

\begin{lemma} [Comparison of Variance and Log-Sobolev Constants] \label{LemmaVarLogSobComp}

Let $\widehat{f}$ be any extension of $f$, and let $C = \sup_{y \in \Omega} \frac{\nu(y)}{\mu(y)}$. Then
\begin{align*}
V_{\nu}(f) &\leq C V_{\mu}(\widehat{f}) \\
L_{\nu}(f) &\leq C L_{\mu}(\widehat{f}) \\
\end{align*}
\end{lemma}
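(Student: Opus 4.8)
The plan is to prove both inequalities by the same device: choose an arbitrary extension $\widehat{f}$ of $f$, expand the left-hand functional over pairs (or points) of $\Omega$, and then bound each term $\nu(x)\nu(y)$ (respectively $\nu(x)$) by $C\,\mu(x)\mu(y)$ (respectively $C\,\mu(x)$) using the definition $C = \sup_{y \in \Omega} \nu(y)/\mu(y)$; finally enlarge the index set from $\Omega$ to $\widehat{\Omega}$ to recover the functional for $\widehat{f}$, using that $\widehat{f}$ agrees with $f$ on $\Omega$ and that every summand is nonnegative.

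For the variance bound, I would start from
\begin{equation*}
V_{\nu}(f) = \frac{1}{2} \sum_{x,y \in \Omega} |f(x) - f(y)|^{2} \nu(x)\nu(y).
\end{equation*}
Since $\nu(x) \leq C\mu(x)$ and $\nu(y) \leq C\mu(y)$ for $x, y \in \Omega$, and every term is nonnegative, this is at most $\frac{C^{2}}{2} \sum_{x,y \in \Omega} |\widehat{f}(x) - \widehat{f}(y)|^{2} \mu(x)\mu(y)$, which is in turn at most $C^{2} V_{\mu}(\widehat{f})$ after extending the sum to all of $\widehat{\Omega}$. This gives the stated bound with $C^{2}$ in place of $C$, so to get the sharp constant $C$ one should instead bound only one of the two factors: write $\nu(x)\nu(y) \le C\,\mu(x)\,\nu(y)$, extend the $x$-sum to $\widehat\Omega$ first, then by symmetry of the summand repeat on the $y$-variable but only after noticing that a cleaner route is available. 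In fact the cleanest argument uses the alternative identity $V_{\pi}(g) = \|g\|_{2,\pi}^{2} - (\sum_x g(x)\pi(x))^{2} = \inf_{c} \sum_x |g(x)-c|^2 \pi(x)$; applying the infimum characterization, pick the optimal constant $c$ for $\widehat{f}$ under $\mu$, and then $V_{\nu}(f) \le \sum_{x\in\Omega}|f(x)-c|^2\nu(x) \le C\sum_{x\in\Omega}|\widehat f(x)-c|^2\mu(x) \le C\sum_{x\in\widehat\Omega}|\widehat f(x)-c|^2\mu(x) = C\,V_{\mu}(\widehat f)$, which is exactly the claimed linear-in-$C$ bound. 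The log-Sobolev bound follows the same pattern: using the variational form $L_{\pi}(g) = \inf_{s>0}\sum_x \left(g(x)^2\log\frac{g(x)^2}{s} - g(x)^2 + s\right)\pi(x)$ (the Bernoulli/entropy variational identity, valid since the bracket is nonnegative and vanishes at $s = \|g\|_{2,\pi}^2$), choose the optimizing $s$ for $\widehat f$ under $\mu$, bound $\nu(x) \le C\mu(x)$ termwise on $\Omega$ — legitimate because the bracket is nonnegative — and extend the sum to $\widehat\Omega$.

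The key steps, in order: (1) recall or record the infimum characterizations of $V_{\pi}$ and $L_{\pi}$ (over a constant $c$, resp.\ over a scale parameter $s>0$), verifying nonnegativity of the relevant summands so that termwise domination and set-enlargement are valid; (2) for each functional, plug the $\mu$-optimizer into the $\nu$-expression for $f$; (3) apply $\nu \le C\mu$ pointwise on $\Omega$; (4) enlarge the summation set from $\Omega$ to $\widehat\Omega$, discarding the extra nonnegative terms the wrong way — i.e.\ observing the $\widehat\Omega$-sum is at least the $\Omega$-sum — and recognize the result as $C$ times the $\mu$-functional of $\widehat f$.

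The main obstacle is purely bookkeeping: getting the constant to be $C$ rather than $C^{2}$. A naive pair-by-pair domination in the double sum defining $V_{\nu}$ costs a factor $C^{2}$, and similarly one must resist bounding both $\|f\|_{2,\nu}^2$ and the outer weight independently in $L_{\nu}$. The fix is to route everything through the variational (infimum) characterizations, where each functional is an \emph{average} against a single copy of the measure, so a single application of $\nu \le C\mu$ suffices; one must also check that using the $\mu$-optimal parameter (rather than the $\nu$-optimal one) on the $\nu$-side only increases the expression, which is immediate since the $\nu$-functional is itself an infimum over that parameter. No genuine analytic difficulty arises beyond this.
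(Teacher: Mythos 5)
Your proof is correct and follows essentially the same route as the paper: both reduce to the infimum (variational) characterizations of $V_{\pi}$ over a shift constant $c$ and of $L_{\pi}$ over a scale parameter $s>0$, apply the pointwise bound $\nu \leq C\mu$ termwise, and then enlarge the index set to $\widehat{\Omega}$. The only cosmetic difference is that the paper bounds $V_{\nu}(f,c)\le C V_{\mu}(\widehat f,c)$ uniformly in $c$ and then takes infima, while you plug in the $\mu$-optimal $c$ directly; the two are interchangeable, and your extra remarks on nonnegativity and on why the naive double-sum gives the wrong constant $C^{2}$ are sound.
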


\begin{proof}
Define, for $c$ real (respectively real and strictly positive), the following functionals:

\begin{align*}
V_{\pi}(f,c) &= \sum_{x \in X} \vert f(x) - c \vert^{2} \pi(x) \\
L_{\pi}(f,c) &= \sum_{x \in X} \left( \vert f(x) \vert^{2} \log(\vert f(x) \vert)^{2} - \vert f(x) \vert^{2} \log(c) - \vert f(x) \vert^{2} + c \right) \pi(x)
\end{align*}

Recall that $V_{\pi}(f) = \inf_{c \in \mathbb{R}} V_{\pi}(f,c)$, and it is shown in \cite{HoSt87} that $L_{\pi}(f) = \inf_{c \in \mathbb{R}, c> 0} L_{\pi}(f,c)$. Thus, we can write

\begin{align*}
V_{\nu}(f,c) &= \sum_{x \in \Omega} \vert f(x) - c \vert^{2} \nu(x) \\
&= \sum_{x \in \widehat{\Omega}} \vert \widehat{f}(x) - c \vert^{2} \nu(x) \\
&= \sum_{x \in \widehat{\Omega}} \vert \widehat{f}(x) - c \vert^{2} \frac{\nu(x)}{\mu(x)} \mu(x) \\
&\leq C V_{\mu} (\widehat{f},c)
\end{align*}
which implies $V_{\nu}(f) = \inf_{c \in \mathbb{R}} V_{\nu}(f,c) \leq  \inf_{c \in \mathbb{R}} V_{\mu}(\widehat{f},c)  = C V_{\mu}(\widehat{f})$. An analogous calculation shows that $L_{\nu}(f,c) \leq C L_{\mu} (\widehat{f},c)$, which implies $L_{\nu}(f) \leq C L_{\mu}(\widehat{f})$.
\end{proof}

As with the extension theory built up from \cite{DiSa93}, it is possible to get bounds on the entire spectrum of $Q$, rather than just the second-largest eigenvalue. Unlike that case, this will require the extensions to have some structure. In particular, fix a map $M$ from $\mathbb{R}^{\Omega}$ to $\mathbb{R}^{\widehat{\Omega}}$ so that for all $f \in \mathbb{R}^{\Omega}$, $Mf \in \mathbb{R}^{\widehat{\Omega}}$ is an extension of $f$. Assume that we can show

\begin{equation} \label{IneqLinExtComp}
\mathcal{E}_{K}(Mf, Mf) \le C_{3} \mathcal{E}_{Q}(f,f)
\end{equation}

for all $f \in \mathbb{R}^{\Omega}$. Next, consider a Hermitian matrix $P$ with real eigenvalues $\lambda_{1} \geq \ldots \geq \lambda_{n}$, and define for any subspace $W$ the functions

\begin{align*}
L(W) &= \min \{ \frac{ \langle Pf, f \rangle }{\langle f,f \rangle} \, : \, f \in W \} \\
U(W) &= \max \{ \frac{ \langle Pf, f \rangle }{\langle f,f \rangle} \, : \, f \in W \}
\end{align*}

Then recall from e.g. page 185 of \cite{HoJo85} that the eigenvalues of $P$ satisfy

\begin{equation*}
\lambda_{i} = \max \{ L(W) \, : \, dim(W^{\perp}) = i \} = \min \{ U(W) \, : \, dim(W) = i +1 \}
\end{equation*}

This variational characterization, together with inequality \eqref{IneqLinExtComp} and $C_{1} = \sup_{y \in \Omega} \frac{\nu(y)}{\mu(y)}$, gives the bounds

\begin{align*}
1 - \beta_{i}(Q) &\geq \frac{1}{C_{1} C_{3}} (1 - \beta_{i}(K)) \\
\alpha(Q) &\geq \frac{1}{C_{1} C_{3}} \alpha(K)
\end{align*}

The main difficulty will be to bound the Dirichlet forms $\mathcal{E}_{Q}$ and $\mathcal{E}_{K}$. We begin by restricting our attention to the special class of simple random walks on regular graphs, and then write a bound for general finite Markov chains. \par 
Assume that $K$ is a $\frac{1}{2}$-lazy simple random walk on $\widehat{\Omega}$, with associated graph $\widehat{G} = (\widehat{V}, \widehat{E})$. That is, the kernel is given by:
\begin{equation*}
K(x,y) = \left\{
		\begin{array}{lll}
			\frac{1}{2} & \mbox{if } y = x \\
			\frac{1}{2d} & \mbox{if } (x,y) \in \widehat{E} \\
			0 & \mbox{otherwise }
		\end{array}
	\right.
\end{equation*}

Then let $G = (V,E)$ be a subgraph of $\widehat{G}$, where $V$ is obtained from $\widehat{V}$ by removing $m$ vertices, and $E$ is obtained from $\widehat{E}$ by removing all edges in $\widehat{E}$ adjacent to one of the removed edges. Then let $Q$ be a random walk on $G$ described by

\begin{equation*}
Q(x,y) = \left\{
		\begin{array}{lll}
			\frac{1}{2}(2 - \frac{1}{d} \deg(x)) & \mbox{if } y = x \\
			\frac{1}{d} & \mbox{if } (x,y) \in E \\
			0 & \mbox{otherwise }
		\end{array}
	\right.
\end{equation*}

where $\deg(x)$ is the number of vertices in $G$ adjacent to $x$. $Q$ is the \textit{Metropolis-Hastings} walk associated with base walk $K$ and target distribution uniform on $G$ (see \cite{MRRTT53} for an introduction to the Metropolis-Hastings algorithm). \par 
To describe the comparison, it will be necessary first to choose a specific extension $\widehat{f}$ of $f$. For each vertex  $x \in \widehat{G}$, fix some probability measure $P_{x}[y]$ on $G$, requiring $P_{x}[y] = \delta_{x}[y]$ for $x \in G$. This defines a family of extensions by
\begin{equation} \label{EqLinearFamilyExtensions}
\widehat{f}(x) = \sum_{y \in G} P_{x}[y] f(y)
\end{equation}

 Next, for each pair $(x,y) \in \widehat{E}$, fix a joint measure $P_{x,y}[a,b]$ on $G \times G$ satisfying $\sum_{a} P_{x,y}[a,b] = P_{y}[b]$ for all $b \in G$ and $\sum_{b} P_{x,y}[a,b] = P_{x}[a]$ for all $a \in G$. This is a coupling of the distributions $P_{x}, P_{y}$. \par 
Next, for each $a,b \in G$ with $\sum_{x,y \in \widehat{G}} P_{x,y}[a,b] > 0$, it is necessary to define a \textit{flow} in $G$ from $a$ to $b$. To do so, call a sequence of vertices $\gamma = \{ a = v_{0,a,b}, v_{1,a,b}, \ldots, v_{k[\gamma], a,b} = b \}$ a \textit{path} from $a$ to $b$ if $(v_{i,a,b}, v_{i+1,a,b}) \in E$ for all $0 \leq i < k[\gamma]$. Then let $\Gamma_{a,b}$ be the collection of all paths from $a$ to $b$. Call a function $F$ from paths to $[0,1]$ a \textit{flow} if $\sum_{\gamma \in \Gamma_{a,b}} F[\gamma] = 1$ for all $a,b \in \Omega$. We will often write $G_{a,b}$ for the restriction of $F$ to $\Gamma_{a,b}$. Finally, for a path $\gamma \in \Gamma_{a,b}$, we will label its initial and final vertices by $i(\gamma) = a$, $o(\gamma) = b$. \par 
For fixed measures $\{ P_{x} \}_{x \in \widehat{G}}$, couplings $\{ P_{x,y} \}_{(x,y) \in \widehat{E}}$, and flow $F$, we obtain the following bound on Dirichlet forms:

\begin{thm}[Comparison of Dirichlet Forms for Metropolized Simple Random Walk] \label{ThmDirCompSrw}
For flows, distributions, and paths as described above,
\begin{equation*}
\mathcal{E}_{K}(\widehat{f}, \widehat{f}) \leq \frac{n-m}{n} \mathcal{A} \mathcal{E}_{Q}(f,f)
\end{equation*}
where 

\begin{align*}
\mathcal{A} &= \sup_{(q,r) \in E}  ( 1 + 2\sum_{\gamma \ni (q,r)} k[\gamma] F[\gamma] \sum_{y \notin G} P_{y}[o(\gamma)]  \\
&+ \sum_{\gamma \ni (q,r)} F[\gamma] k[\gamma] \sum_{(x,y) \in \widehat{E}, x,y \notin G} P_{x,y}[i(\gamma),o(\gamma)] )
\end{align*}

\end{thm}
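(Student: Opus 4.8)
The plan is to run the Diaconis--Saloff-Coste path/flow comparison, except that the path decomposition is applied to the \emph{extension} $\widehat{f}$ rather than to $f$, with the extension rewritten through the couplings $P_{x,y}$. First I would clear away the normalizations. Both chains are $\tfrac{1}{2}$-lazy, so the self-loops contribute nothing to either Dirichlet form, and since $\mu$ is uniform on $\widehat{\Omega}$ while $\nu$ is uniform on $\Omega$, writing $n = |\widehat{V}|$ one has
\begin{equation*}
\mathcal{E}_{K}(\widehat{f},\widehat{f}) = \frac{1}{2dn}\sum_{\{x,y\}\in\widehat{E}}\bigl|\widehat{f}(x)-\widehat{f}(y)\bigr|^{2}, \qquad \mathcal{E}_{Q}(f,f) = \frac{1}{2d(n-m)}\sum_{\{q,r\}\in E}\bigl|f(q)-f(r)\bigr|^{2}.
\end{equation*}
Thus $\tfrac{n-m}{n}$ is exactly the ratio of the two normalizing constants, and (up to the factor-of-two conventions noted below) the theorem is equivalent to the combinatorial statement that, once each $\bigl|\widehat{f}(x)-\widehat{f}(y)\bigr|^{2}$ is written as a nonnegative combination of terms $\bigl|f(q)-f(r)\bigr|^{2}$ over edges of $E$ and everything is summed over $\widehat{E}$, the total coefficient of each $\bigl|f(q)-f(r)\bigr|^{2}$ is at most $\mathcal{A}$.

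For the expansion I would partition $\widehat{E} = E \sqcup \widehat{E}_{1} \sqcup \widehat{E}_{2}$ according to whether an edge has zero, one, or two endpoints among the $m$ removed vertices. If $\{x,y\}\in E$ then $x,y\in G$, so $\widehat{f}(x)-\widehat{f}(y)=f(x)-f(y)$ is already of the desired form with coefficient $1$; this is what produces the leading $1$ in $\mathcal{A}$. For an edge incident to a removed vertex, the extension formula \eqref{EqLinearFamilyExtensions} together with the coupling identities $\sum_{b}P_{x,y}[a,b]=P_{x}[a]$, $\sum_{a}P_{x,y}[a,b]=P_{y}[b]$ gives
\begin{equation*}
\widehat{f}(x)-\widehat{f}(y) = \sum_{a\in G}P_{x}[a]f(a) - \sum_{b\in G}P_{y}[b]f(b) = \sum_{a,b\in G}P_{x,y}[a,b]\bigl(f(a)-f(b)\bigr),
\end{equation*}
and then for each $(a,b)$ with $P_{x,y}[a,b]>0$ one telescopes $f(a)-f(b)$ along the flow, $f(a)-f(b) = \sum_{\gamma\in\Gamma_{a,b}}F[\gamma]\sum_{i=0}^{k[\gamma]-1}\bigl(f(v_{i,a,b})-f(v_{i+1,a,b})\bigr)$ (an empty sum, hence $0$, when $a=b$).

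The analytic core is then routine: $\{P_{x,y}[a,b]\}_{a,b}$ and $\{F[\gamma]\}_{\gamma\in\Gamma_{a,b}}$ are probability vectors, so Jensen's inequality applied to $t\mapsto t^{2}$, followed by Cauchy--Schwarz on the length-$k[\gamma]$ path sum, yields
\begin{equation*}
\bigl|\widehat{f}(x)-\widehat{f}(y)\bigr|^{2} \le \sum_{a,b}P_{x,y}[a,b]\sum_{\gamma\in\Gamma_{a,b}}F[\gamma]\,k[\gamma]\sum_{\{q,r\}\in\gamma}\bigl|f(q)-f(r)\bigr|^{2}.
\end{equation*}
Summing this over $\widehat{E}_{1}\cup\widehat{E}_{2}$, adding the $\{q,r\}$ term from $E$, and interchanging the order of summation, I would read off the coefficient of a fixed $\bigl|f(q)-f(r)\bigr|^{2}$ as $1$ plus a contribution from $\widehat{E}_{1}$ and one from $\widehat{E}_{2}$. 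For a one-removed-endpoint edge $\{x,y\}$ with $x\in G$, $y\notin G$ the coupling is forced to be a product, $P_{x,y}[a,b]=\delta_{x}[a]P_{y}[b]$, so after enlarging the range of $y$ from ``removed and adjacent to the retained endpoint'' to all $y\notin G$ its contribution is bounded by $\sum_{\gamma\ni(q,r)}F[\gamma]k[\gamma]\sum_{y\notin G}P_{y}[o(\gamma)]$; a two-removed-endpoint edge contributes $\sum_{\gamma\ni(q,r)}F[\gamma]k[\gamma]\sum_{(x,y)\in\widehat{E},\ x,y\notin G}P_{x,y}[i(\gamma),o(\gamma)]$. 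Taking the supremum over $(q,r)\in E$ assembles these into $\mathcal{A}$.

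The hard part is this last bookkeeping, not the two inequalities. One must keep careful track of edge orientations and of the factor-of-two conventions in the Dirichlet forms so that the three pieces combine into exactly $\mathcal{A}$; in particular, the coefficient $2$ on the middle term of $\mathcal{A}$ should be read as coming from the freedom to route the flow between a retained vertex and the support of a removed vertex's extension in either direction (the flow $F$ is prescribed on \emph{all} ordered pairs), which forces one to allow the removed-vertex weight to sit at either $i(\gamma)$ or $o(\gamma)$, i.e.\ to count such paths twice. The remaining points --- that $F$ only needs to be used for pairs $(a,b)$ actually arising from $\widehat{E}_{1}\cup\widehat{E}_{2}$, and that degenerate couplings (for instance when $P_{x}$ and $P_{y}$ overlap) contribute nothing because then $f(a)-f(b)=0$ --- are harmless.
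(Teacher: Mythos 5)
Your proposal is correct and follows essentially the same route as the paper: the paper first proves the general-chain bound (Theorem~\ref{ThmDirGenChain}) by splitting $\mathcal{E}_{K}$ into the three pieces $R_{1}, R_{2}, R_{3}$ according to how many endpoints of an edge of $\widehat{G}$ lie outside $\Omega$, telescoping $f(a)-f(b)$ along flows, applying Cauchy--Schwarz, and reading off the coefficient of each $(f(q)-f(r))^{2}$; Theorem~\ref{ThmDirCompSrw} is then obtained by specializing $K, Q, \mu, \nu$ to the uniform/Metropolis values and taking the trivial single-edge flow for pairs in $E$ (producing the leading~$1$). Your partition of $\widehat{E}$ into $E \sqcup \widehat{E}_{1} \sqcup \widehat{E}_{2}$ is the same trichotomy as $R_{1}, R_{2}, R_{3}$, your use of Jensen on the coupling weights plus Cauchy--Schwarz along paths is the same inequality chain, and your extraction of $\frac{n-m}{n}$ as the ratio of the normalizing constants is exactly the content of the specialization. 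The only place I'd tighten the exposition is the gloss on the factor of $2$: in the paper it arises cleanly because $\mathcal{E}_{K} = \tfrac12 R_{1} + R_{2} + \tfrac12 R_{3}$ with $R_{2}$ being the one-sided cross sum, i.e.\ the ordered-pair cross terms $(x\in\Omega, y\notin\Omega)$ and $(y\notin\Omega, x\in\Omega)$ each contribute once; your ``route in either direction'' phrasing is the same accounting stated informally, so it's fine, but the ordered-pair explanation is cleaner to verify.
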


For general Markov chains $K$ and $Q$, define a graph $\widehat{G}$ with vertex set $\widehat{\Omega}$ associated with $K$ by creating an edge $(x,y) \in \widehat{E}$ if $K(x,y) > 0$, and a graph $G$ with vertex set $\Omega$ associated with $Q$ by creating an edge $(x,y) \in E$ if $Q(x,y) > 0$. The same setup then gives the following bound:

\begin{thm}[Comparison of Dirichlet Forms for General Chains] \label{ThmDirGenChain}
For flows, distributions and couplings as described above,
\begin{equation*}
\mathcal{E}_{K}(\widehat{f}, \widehat{f}) \leq \mathcal{A} \mathcal{E}_{Q}(f,f)
\end{equation*}
where 
\begin{align*}
\mathcal{A} &= \sup_{Q(q,r) >0}  \frac{1}{Q(q,r) \nu(q)} ( \sum_{\gamma \ni (q,r)} F[\gamma] k[\gamma] K(i(\gamma),o(\gamma)) \mu(i(\gamma)) \\
&+ 2  \sum_{\gamma \ni (q,r)} k[\gamma] F[\gamma] \sum_{y \notin G} P_{y}[o(\gamma)] K(i(\gamma),y) \mu(i(\gamma)) \\
&+ \sum_{\gamma \ni (q,r)} F[\gamma] k[\gamma] \sum_{(x,y) \in \widehat{E}, x,y \notin G} P_{x,y}[i(\gamma),o(\gamma)] K(x,y) \mu(x) )
\end{align*}

\end{thm}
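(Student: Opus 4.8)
The plan is to carry out the path-and-flow comparison of Diaconis and Saloff-Coste, with the bookkeeping enlarged to accommodate edges of $\widehat{G}$ that have one or both endpoints outside $G$. Write $d\!f(e) = f(u) - f(v)$ for a directed edge $e = (u,v)$ of $G$, so that $\mathcal{E}_{Q}(f,f) = \frac{1}{2}\sum_{(q,r) \in E} |d\!f(q,r)|^{2} Q(q,r)\nu(q)$ and $\mathcal{E}_{K}(\widehat{f},\widehat{f}) = \frac{1}{2}\sum_{(x,y) \in \widehat{E}} |\widehat{f}(x) - \widehat{f}(y)|^{2} K(x,y)\mu(x)$, with $\widehat{f}$ the extension \eqref{EqLinearFamilyExtensions}. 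First I would telescope: for a directed edge $(x,y) \in \widehat{E}$, insert the coupling $P_{x,y}$ and then the flow $F$,
\[
\widehat{f}(x) - \widehat{f}(y) = \sum_{a,b \in G} P_{x,y}[a,b]\bigl(f(a) - f(b)\bigr) = \sum_{a,b \in G} P_{x,y}[a,b] \sum_{\gamma \in \Gamma_{a,b}} F[\gamma] \sum_{e \in \gamma} d\!f(e),
\]
where $P_{z} = \delta_{z}$ collapses the corresponding sum whenever $z \in G$, and --- the one real choice in the argument --- when exactly one of $x,y$ lies in $G$ the flow is routed \emph{out of} that endpoint (so for $x \in G,\ y \notin G$ one uses flows $\Gamma_{x,b}$ with $b$ distributed as $P_{y}$). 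Since the weights $P_{x,y}[a,b]F[\gamma]$ sum to $1$ over $(a,b,\gamma)$, two applications of Cauchy--Schwarz --- once against those weights, once against the all-ones vector on the $k[\gamma]$ edges of $\gamma$ --- give
\[
|\widehat{f}(x) - \widehat{f}(y)|^{2} \le \sum_{a,b \in G}\sum_{\gamma \in \Gamma_{a,b}} P_{x,y}[a,b]\, F[\gamma]\, k[\gamma] \sum_{e \in \gamma} |d\!f(e)|^{2},
\]
which is where the factors $k[\gamma]$ in $\mathcal{A}$ come from.

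Next I would substitute this bound into $\mathcal{E}_{K}(\widehat{f},\widehat{f})$ and exchange the order of summation, collecting for each directed edge $(q,r) \in E$ the total coefficient of $|d\!f(q,r)|^{2}$; it suffices to show this coefficient is at most $\mathcal{A}$ times $\frac{1}{2}Q(q,r)\nu(q)$, the corresponding coefficient in $\mathcal{E}_{Q}(f,f)$, for then $\mathcal{E}_{K}(\widehat{f},\widehat{f}) \le \mathcal{A}\,\mathcal{E}_{Q}(f,f)$. That coefficient is a sum over directed edges $(x,y) \in \widehat{E}$ and over paths $\gamma \ni (q,r)$, each term carrying $\frac{1}{2}K(x,y)\mu(x)\,F[\gamma]\,k[\gamma]$ times the probability that the routing chosen for $(x,y)$ sends a path with endpoints $i(\gamma), o(\gamma)$. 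I would split the sum over $(x,y)$ into the four cases determined by which of $x,y$ lie in $G$ and collapse the couplings via $P_{z} = \delta_{z}$ for $z \in G$: the case $x,y \in G$ gives $\frac{1}{2}\sum_{\gamma \ni (q,r)} F[\gamma]k[\gamma]K(i(\gamma),o(\gamma))\mu(i(\gamma))$, the first term of $\mathcal{A}$ (paths with non-adjacent endpoints dropping out since $K$ vanishes there); the case $x,y \notin G$ gives the third term of $\mathcal{A}$ after reordering; and the two mixed cases --- both routed out of the endpoint lying in $G$, the one with the non-$G$ endpoint listed first requiring a single use of reversibility $K(x,y)\mu(x) = K(y,x)\mu(y)$ --- give two equal copies of $\frac{1}{2}\sum_{\gamma \ni (q,r)} F[\gamma]k[\gamma]\sum_{y \notin G} P_{y}[o(\gamma)]K(i(\gamma),y)\mu(i(\gamma))$, i.e.\ the middle term. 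Adding the four pieces, the coefficient equals $\frac{1}{2}$ times the bracketed quantity in the definition of $\mathcal{A}$, evaluated at $(q,r)$, hence is at most $\frac{1}{2}\mathcal{A}\,Q(q,r)\nu(q)$; this proves the theorem. Since $f \mapsto \widehat{f}$ is linear, the inequality just proved is \eqref{IneqLinExtComp} with $C_{3} = \mathcal{A}$, so the full-spectrum comparisons stated before the theorem follow as well.

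The two Cauchy--Schwarz steps and the resummation are routine; the real work is the four-case analysis, and two things there need care. First, one must route the flow in the mixed case out of the $G$-endpoint: with this choice the two mixed contributions become literally equal (after one use of reversibility) and produce the clean factor $2$ in $\mathcal{A}$, whereas the opposite routing leaves two different expressions. Second, one must track precisely which telescoping feeds which $|d\!f(q,r)|^{2}$, because a single flow path between images of deleted vertices can legitimately contribute, with different weights, through more than one of the four cases at once; getting this accounting right is what keeps the final constant equal to $\mathcal{A}$ and not something coarser. The proof of Theorem~\ref{ThmDirCompSrw} is the same argument specialized to $K$ a lazy simple random walk, where $\mu,\nu,Q,K$ take only a few values.
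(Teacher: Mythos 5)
Your proposal is correct and is essentially the paper's own proof: split $\mathcal{E}_{K}(\widehat{f},\widehat{f})$ according to which endpoints of each $\widehat{E}$-edge lie in $\Omega$, telescope through the couplings and flows, apply Cauchy--Schwarz twice, and collect the coefficient of $|f(q)-f(r)|^{2}$ for each $E$-edge. The only cosmetic difference is that the paper works directly with three terms $R_{1}+2R_{2}+R_{3}$ (folding the two mixed orientations together from the start, which implicitly uses the same reversibility identity $K(x,y)\mu(x)=K(y,x)\mu(y)$ you invoke explicitly), whereas you keep four cases and merge two of them; the resulting bound and the factor of $2$ on the middle term are identical.
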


We will now describe some applications of Theorem \ref{ThmDirCompSrw}. The first is analogous to example 2.1 of \cite{DiSa93b}. Let $K$ be the kernel of the $\frac{1}{2}$-lazy simple random walk on the torus $G = \mathbb{Z}_{n}^{2}$ with edges of the form $( (i,j), (i+1,j) )$ and $((i,j),(i,j+1))$. Then let $v_{1}, v_{2}, \ldots, v_{m} \in G$ be any collection of vertices with the property that no two are in the same square $\{(i,i), (i+1,i), (i,i+1), (i+1,i+1) \}$ in $G$. Then let $Q$ be the Metropolis-Hastings walk associated with $G \backslash \{ v_{1}, v_{2}, \ldots, v_{m} \}$. The following is a general bound on the Dirichlet form of $Q$:

\begin{thm} [Comparison for Random Walk on the Torus with Holes] \label{ThmSimpleTorusExample} 
All functions $f$ on $G \backslash \{ v_{1}, v_{2}, \ldots, v_{m} \}$ have extensions $\widehat{f}$ to $G$ so that
\begin{equation*}
\mathcal{E}_{K}(\widehat{f}, \widehat{f}) \leq 6 \left( 1 - \frac{m}{n} \right) \mathcal{E}_{Q}(f,f)
\end{equation*}
\end{thm}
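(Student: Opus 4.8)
The plan is to apply Theorem~\ref{ThmDirCompSrw}: I will choose the extension measures, couplings and flows so that all detours stay within a bounded neighborhood of the removed vertices, let $\widehat f$ be the extension of $f$ defined by \eqref{EqLinearFamilyExtensions} from these measures, and then estimate the constant $\mathcal A$ by hand.

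The hypothesis enters through one local fact. If $v_k=(i,j)$ is removed, then each of the eight vertices at $\ell^\infty$-distance $1$ from $v_k$ --- the four lattice neighbors and the four diagonal vertices $(i\pm1,j\pm1)$ --- shares a unit square with $v_k$, and hence is not itself removed; consequently the ``ring'' $8$-cycle through these eight vertices survives in $G$ together with all eight of its edges. With this in hand I would set $P_x=\delta_x$ for $x\in G$ and let $P_{v_k}$ be the point mass at a fixed lattice neighbor $w_k$ of $v_k$ (averaging uniformly over the four neighbors works just as well). For an edge of $\widehat E$ with both endpoints in $G$ the coupling is the diagonal measure and the flow is the edge itself; for an edge $(v_k,u)\in\widehat E$ the coupling is $P_{v_k}\otimes\delta_u$, so the only pair requiring a genuine flow is $(w_k,u)$ with $w_k,u$ both lattice neighbors of the single hole $v_k$. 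When $w_k\ne u$ I route $w_k\to u$ along the ring: a path of length $2$ if $w_k,u$ are adjacent around $v_k$ and of length $4$ if they are opposite (one may split the length-$4$ flow symmetrically between the two arcs of the ring, but this refinement is not essential).

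It then remains to bound $\mathcal A$, which is the supremum over $(q,r)\in E$ of $1$ plus two sums over the flow-paths $\gamma$ that pass through $(q,r)$. The second of these sums vanishes, since no edge of $\widehat E$ has both its endpoints removed. For the first I would combine three facts: every flow-path $\gamma$ has length at most $4$ and (if it is not a single edge) lies on the ring of one hole; a fixed torus edge is a ring edge of at most four vertices, of which at most two can be removed, again by the one-hole-per-square hypothesis; and for a fixed vertex $r$ the quantity $\sum_{y\notin G}P_y[r]$ is at most (a fraction of) the number of holes adjacent to $r$, which is at most two. Adding up the contributions that a single worst-case edge $(q,r)$ can receive from all the nearby holes and maximizing over the admissible hole configurations yields $\mathcal A\le 6$; with the prefactor $\tfrac{n-m}{n}$ supplied by Theorem~\ref{ThmDirCompSrw} this is the asserted inequality.

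The main obstacle is exactly this last estimate: a somewhat tedious case analysis that pins down the worst edge $(q,r)$ and keeps track of how many distinct holes can route a detour through it, and with what weight, under the constraint that no two holes share a unit square. This is where the hypothesis does real work --- not merely in guaranteeing that the local detours exist, but in capping the number of overlapping contributions. Everything else is routine; in particular every surviving vertex loses at most one neighbor, so $\deg(x)\ge3$ and $Q$ is the well-defined, $\tfrac12$-lazy, reversible, ergodic chain required to apply Theorem~\ref{ThmDirCompSrw}.
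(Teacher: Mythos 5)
Your overall strategy is the right one: invoke Theorem~\ref{ThmDirCompSrw}, note that the no-two-holes-per-square hypothesis makes the $8$-cycle ``ring'' around each hole survive in $G$, set up extension measures on the neighbors of each hole, route all detours along this ring, and bound $\mathcal{A}$. This matches the skeleton of the paper's argument. But there is a genuine gap, and it is not just the omitted ``tedious case analysis''---the specific choices you lead with do not produce the stated constant.

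You take $P_{v_k}$ to be a \emph{point mass} at a single neighbor $w_k$ and remark that uniform averaging ``works just as well.'' It does not; the uniform choice is essential. The congestion bound in Theorem~\ref{ThmDirCompSrw} involves a factor $\sum_{y\notin G}P_y[o(\gamma)]$, which for the point mass equals $1$, while for the uniform measure it equals $\tfrac14$. With the point mass every detour around $v_k$ must terminate at $w_k$, so the two ring edges incident to $w_k$ each carry one length-$2$ path (weight $1$) and half of the length-$4$ path (weight $\tfrac12$); plugging into the formula for $\mathcal{A}$ gives $1 + 2\bigl(2\cdot 1\cdot 1 + 4\cdot\tfrac12\cdot 1\bigr) = 9$ from a \emph{single} isolated hole, before even considering an edge shared by two rings. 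The paper instead takes $P_{v_k}$ uniform over the four neighbors; the flows are then spread over all twelve ordered pairs of neighbors, and the $\tfrac14$ weight exactly offsets the larger number of routes, which is what brings the constant down to $6$. So your parenthetical remark has the logic backwards: averaging is not an interchangeable alternative, it is the mechanism that makes the bound come out.

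Two smaller points. First, the claim that every surviving vertex has $\deg(x)\ge 3$ is false: the two opposite neighbors $(a\pm1,b)$ of a vertex $(a,b)$ do not share a unit square and can both be removed, so $\deg(x)$ can equal $2$ (this does not break the argument, since $Q$ is still well defined and ergodic, but the stated justification is wrong). Second, the final ``maximizing over the admissible hole configurations yields $\mathcal{A}\le 6$'' is precisely the content of the theorem and is asserted rather than carried out; since, as noted above, it is actually false for your point-mass choice, this is not a gap that can be filled by routine bookkeeping---it requires switching to the uniform extension and then doing the count the paper does.
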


Not all bounds are so useful. For example, if the removed vertices are of the form $\{ (1,1), (2,2), \ldots, (n-1,n-1) \} \cup \{ (1,1 + \frac{n}{2}), (2,2 + \frac{n}{2}), \ldots, (n-1,n + 1 - \frac{n}{2}) \}$, we have:

\begin{thm} [Comparison for Random Walk on the Torus with Bottleneck] \label{ThmBottleneckTorusExample} 
All functions $f$ on $G \backslash \{ (1,1), (1, 1 + \frac{n}{2}), (2,2 + \frac{n}{2}), \ldots, (n-1, n-1) (n-1,n+1 - \frac{n}{2}) \}$ have extensions $\widehat{f}$ to $G$ so that

\begin{equation*}
\mathcal{E}_{K}(\widehat{f}, \widehat{f}) \leq 8 n^{2} \mathcal{E}_{Q}(f,f)
\end{equation*}

\end{thm}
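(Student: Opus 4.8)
The plan is to derive this from Theorem~\ref{ThmDirCompSrw} by tailoring the extension measures, couplings and flow to the geometry of the two removed diagonals. Write $n$ for the side length of the torus, so $|\widehat{V}|=n^{2}$ and $m=2(n-1)$ vertices are removed; take $n$ even and large, since the finitely many remaining cases are immediate. Put $D_{1}=\{(i,i)\}$, $D_{2}=\{(i,i+\tfrac n2)\}$, and let $g_{1}=(0,0)$ and $g_{2}=(0,\tfrac n2)$ be the one vertex of each that is not removed. Cutting $\widehat{G}$ along $D_{1}\cup D_{2}$ disconnects it into the two annular regions $R_{j}=\{(a,b):\ (b-a)\bmod n\in I_{j}\}$ with $I_{1}=(0,\tfrac n2)$ and $I_{2}=(\tfrac n2,n)$, so inside $G$ the only passages between $R_{1}$ and $R_{2}$ are through $g_{1}$ and $g_{2}$. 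The key structural observation is that for $n\ge 4$ the removed set is an \emph{independent set} of $\widehat{G}$: any two of its vertices differ by a vector with both coordinates equal modulo $n$, hence lie at graph distance at least $2$. Consequently no edge of $\widehat{E}$ has both endpoints outside $G$, the couplings $P_{x,y}$ are forced to be point masses, and the third term in the definition of $\mathcal{A}$ vanishes identically.

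Next I would fix the extension. Each removed vertex $v$ has exactly two $\widehat{G}$-neighbours in $R_{1}$ and two in $R_{2}$ --- for $v=(i,i)$ these are $(i-1,i),(i,i+1)$ and $(i+1,i),(i,i-1)$ respectively, and similarly for $D_{2}$. I set $P_{v}[\,\cdot\,]=\delta_{b(v)}[\,\cdot\,]$, where $b(v)$ is a chosen $R_{1}$-neighbour of $v$, say $b((i,i))=(i-1,i)$ and $b((i,i+\tfrac n2))=(i+1,i+\tfrac n2)$; one checks these lie in $G$ and that $b$ is injective. It then remains only to choose, for each $\widehat{E}$-edge $(x,v)$ with $x\in G$ and $v$ removed, a flow in $G$ from $x$ to $b(v)$: when $x=b(v)$ this is the trivial (length $0$) path; when $x$ is the other $R_{1}$-neighbour of $v$ it is a detour of length $2$ staying inside $R_{1}$; and when $x$ is one of the two $R_{2}$-neighbours of $v$ it is a path that runs along $R_{2}$ to the nearer gate, crosses it, and runs back inside $R_{1}$ to $b(v)$. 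I route the $D_{1}$-detours through $g_{1}$ and the $D_{2}$-detours through $g_{2}$, always going the short way around.

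Finally I would bound $\mathcal{A}=\sup_{(q,r)\in E}(\cdots)$. Since $b$ is injective, $\sum_{y\notin G}P_{y}[o(\gamma)]\le 1$ for every path $\gamma$ in use, so the middle term at an edge $(q,r)$ is at most $2k_{\max}$ times the number of source--target pairs routed through $(q,r)$. The trivial and length-$2$ paths contribute $O(1)$ and never concentrate on any edge. Because $\mathrm{diam}(G)=O(n)$, every long detour has length $O(n)$, and going the short way around a diagonal one gets length at most $2n+O(1)$; there are only $2(n-1)$ of them at each gate, so no edge is used by more than $2n$ of them. Hence $\mathcal{A}\le 1+2(2n+O(1))(2n)+0=8n^{2}+O(n)$, and the remaining lower-order terms are absorbed by the prefactor $|V|/|\widehat{V}|=1-2(n-1)/n^{2}<1$, giving $\mathcal{E}_{K}(\widehat f,\widehat f)\le 8n^{2}\,\mathcal{E}_{Q}(f,f)$. (Distributing each gate's detours over its four incident edges improves the congestion bound $2n$ to roughly $n$, but the stated constant does not need this.)

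The only step calling for any care is this congestion estimate at the two gates: one must check both that the $\Theta(n)$ detours funnelled into a gate can be routed so that $(\text{detour length})\times(\text{edge congestion})$ stays $O(n^{2})$, and that a gate never has to carry the detours of both diagonals; the rest is routine bookkeeping that leans on the independence of the removed set. I would stress that this is the actual content of the example: both factors of $n$ are genuine --- one from the length of the unavoidable detour around a diagonal, one from the number of detours squeezed through a single gate --- so the $\Theta(n^{2})$ loss against the hole-free estimate of Theorem~\ref{ThmSimpleTorusExample} is intrinsic to this flow, not slack in the bookkeeping.
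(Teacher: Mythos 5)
Your proposal is correct and follows the same overall strategy as the paper: apply Theorem~\ref{ThmDirCompSrw} after fixing an explicit extension, noting that the removed set is independent so no couplings are needed and the third term in $\mathcal{A}$ vanishes, then bound the middle term by a (max path length) $\times$ (max edge congestion) product, each factor being $O(n)$. The one genuine difference is your choice of extension: you take $P_v$ to be a point mass $\delta_{b(v)}$ with $b$ an injective selection of one $R_1$-neighbour per removed vertex, whereas the paper takes $P_v$ to be the uniform measure on the four neighbours of $v$. Your choice buys a cleaner bookkeeping step, since injectivity forces $\sum_{y\notin G}P_y[o(\gamma)]\le 1$ with no fractional weights to track, at the cost of having to check that $b$ can be chosen injectively (true for $n>4$, as you implicitly use). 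Your account is also more explicit than the paper's about where the two factors of $n$ come from --- the paper's proof simply asserts a count of $2n$ relevant pairs and a $4n$ path length bound without describing the gate/annulus geometry --- and your closing remark correctly identifies why this loss is intrinsic to any flow-based comparison here. Two small cautions: (i) you tacitly pass from $\mathcal{A}\le 8n^2+O(n)$ to the stated $8n^2$ by appealing to the prefactor $\frac{n^2-m}{n^2}=1-\frac{2(n-1)}{n^2}$; this only works if the hidden constant in $O(n)$ is below $16$, which your routing does achieve but which you should state rather than wave at; (ii) you should say explicitly that the short length-$2$ paths inside $R_1$ also contribute to the congestion sum at each edge, though only $O(1)$ per edge, so they land in the same absorbed lower-order term. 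Neither issue affects correctness, and the paper's own proof is, if anything, terser about both points.
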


The result is the same upper bound as is given directly by Cheeger's inequality (see Theorem 13.14 of \cite{LPW09}). As discussed immediately after the proof, it seems impossible to do any better by comparison to the standard simple random walk on the torus using Theorem \ref{ThmDirCompSrw}. \par 

The main example in this paper is an application of Theorem \ref{ThmDirCompSrw} to the problem of sampling from derangements. Recall that a permutation $\sigma \in S_{n}$ is called a \textit{derangement} if, for all $i \in [n]$, $\sigma(i) \neq i$. We will compare the well-known `random transposition' walk on $S_{n}$ to its restriction to the derangements $D_{n}$. More precisely, consider the Cayley graph $\widehat{G}$ with vertex set $\widehat{V} = S_{n}$ and edge set $\widehat{E}$ given by $(x,y) \in \widehat{E}$ if and only if $y^{-1} x$ is a transposition. We will compare the $\frac{1}{2}$-lazy transition kernel $K$ on $\widehat{G}$ to its Metropolized version $Q$ on the restriction to derangements $D_{n} \subset S_{n}$. \par 
Although sampling from the set of derangements is not hard (it is easy to sample from $S_{n}$ and rejection-sampling based on this is fairly efficient), the Markov chain is closely related to several more difficult sampling problems. There has been a great deal of interest in the problem of sampling from permutations with restrictions, beginning with the work of Diaconis, Graham and Holmes in \cite{DGH99}. See also the recent work \cite{Blum12}, \cite{Blum11b}, and \cite{Jian12} and the references contained therein for a discussion of other examples. Our main result is:

\begin{thm} [Dirichlet Form Comparison for the Random Transposition Walk on Derangements] \label{ThmDerangementMixing}
Fix $n \geq 10$. All functions $f$ on $ D_{n} $ have extensions $\widehat{f}$ to $S_{n}$ so that

\begin{equation*}
\mathcal{E}_{K}(\widehat{f}, \widehat{f}) \leq 22(e+1) (1 + \epsilon_{n})  \mathcal{E}_{Q}(f,f)
\end{equation*}
where $\vert \epsilon_{n} \vert \leq \frac{13}{n}$. In the other direction, any function $f$ on $D_{n}$ and any extension $\widehat{f}$ of $f$ to $S_{n}$ satisfies

\begin{equation*}
\mathcal{E}_{K}(\widehat{f}, \widehat{f}) \geq \frac{1}{2e}  \mathcal{E}_{Q}(f,f)
\end{equation*}
\end{thm}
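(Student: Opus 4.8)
The plan is to apply Theorem~\ref{ThmDirCompSrw} with a carefully chosen family of extensions, and to verify that the constant $\mathcal{A}$ arising there is bounded by something close to $22(e+1)$; the reverse inequality is much easier and I treat it separately at the end. The setup is that $\widehat{G}$ is the Cayley graph of $S_n$ with the transposition connection set, $G$ is the induced subgraph on the derangements $D_n$, and the removed vertices are the non-derangements. The degree is $d = \binom{n}{2}$. The first step is to choose, for each non-derangement $x$, a probability measure $P_x$ on $D_n$: the natural choice is to push $x$ onto derangements by composing with a small number of well-chosen transpositions that fix the deranged part of $x$ and resolve its fixed points, spreading the mass roughly uniformly over the derangements reachable this way. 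One must check the consistency requirement $P_x = \delta_x$ for $x \in D_n$ and that the measures $P_x$ don't concentrate too much, since the $\sum_{y \notin G} P_y[o(\gamma)]$ terms in $\mathcal{A}$ are controlled by how many non-derangements can map to a given derangement under these resolving moves; a counting argument comparing $|S_n \setminus D_n|$ to $|D_n|$ (both are $n!(1 + O(1/n))$ up to the factor $e^{-1}$) gives the $\epsilon_n$ with $|\epsilon_n| \le 13/n$.

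Next I would choose the couplings $P_{x,y}$ for edges $(x,y) \in \widehat{E}$: when $y = x\tau$ for a transposition $\tau$, one wants $P_x$ and $P_y$ to be coupled so that the resolving moves agree as much as possible, so that with high probability the coupled pair $(a,b)$ has $a = b$ or $a,b$ adjacent in $G$; this keeps the path lengths $k[\gamma]$ bounded (a constant number of steps, which is where a factor like $2$ or $3$ enters). Then I must define the flow $F$: for each pair $(a,b)$ in the support of the coupling, pick a short canonical path in $D_n$ from $a$ to $b$, of length $O(1)$, and spread it to avoid congestion. The crucial estimate is the congestion bound: for each edge $(q,r) \in E$, I must count how many paths $\gamma$ pass through it, weighted by $F[\gamma] k[\gamma]$, and show the total (including the two correction sums over $y \notin G$ and over edges $(x,y)$ with both endpoints outside $G$) is at most roughly $22$; the leading constant comes from the base congestion of canonical paths for random transpositions restricted to derangements, and the $(e+1)$ factor is the ratio $\nu(q)/\mu(q) \cdot (\text{number of non-derangements per derangement})$-type correction that appears because $|S_n|/|D_n| \to e$.

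The main obstacle is the congestion count for the flow $F$ together with the two ``boundary'' terms in $\mathcal{A}$: one has to show simultaneously that the canonical paths in $D_n$ are short and well-spread, and that the resolving measures $P_x$ for non-derangements don't pile up on any single derangement or overload any single edge. This is the analog of the classical bound $\mathcal{A} = O(n)$ (in the appropriate normalization) for random transpositions, but now one must do the bookkeeping on $D_n$ rather than $S_n$ and absorb the non-derangements, and keeping the constant explicit (rather than just $O(1)$) requires being careful about exactly how many transpositions are needed to derange a permutation with $k$ fixed points and how the induced paths overlap. For the reverse inequality $\mathcal{E}_K(\widehat f, \widehat f) \ge \frac{1}{2e}\mathcal{E}_Q(f,f)$, I would argue directly: every edge of $G$ is an edge of $\widehat G$ with the same geometric weight $\tfrac{1}{2d}$, while $\mu(x) = 1/n!$ and $\nu(x) = 1/|D_n|$ with $|D_n| \ge n!/(2e)$ for $n \ge 10$, so $\mathcal{E}_K(\widehat f,\widehat f) \ge \sum_{(x,y)\in E} |f(x)-f(y)|^2 K(x,y)\mu(x) = \frac{|D_n|}{n!}\sum_{(x,y)\in E}|f(x)-f(y)|^2 Q(x,y)\nu(x) \ge \frac{1}{2e}\mathcal{E}_Q(f,f)$, using $Q(x,y) \le \tfrac1d$ wherever $K(x,y) = \tfrac1{2d}$ so that the per-edge contribution on the $K$ side dominates. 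A small check is needed to handle the extra terms in $\mathcal{E}_K$ coming from edges of $\widehat G$ with at least one endpoint outside $D_n$, but these only help the lower bound since they are nonnegative.
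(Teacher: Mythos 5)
Your proposal correctly identifies the high-level framework — apply Theorem~\ref{ThmDirCompSrw} with custom measures $P_x$, couplings $P_{x,y}$, and a flow $F$ — but essentially all of the substance of the upper bound is deferred to the ``main obstacle'' paragraph, and the little that is said about where the constants come from does not match how they actually arise. In the paper's argument, the constant is \emph{not} a product of ``base congestion for random transpositions'' times a $|S_n|/|D_n|$ correction; there is no factor anywhere of the form $\nu(q)/\mu(q)$ appearing in $\mathcal{A}$ (that ratio enters only later through Lemma~\ref{LemmaVarLogSobComp}, and not into this theorem). The $2(e+1)$ factor really comes from the sum $\sum_{j\ge 1}\frac{n!}{j!(n-j)!}\cdot\frac{(j-1)!}{(n-1)!}\approx 2e+1$, controlling the total coupling weight a fixed edge can receive from pairs $(x,y)$ with $|\mathrm{Fix}(x)|=j$, summed over $j$; the factor of $11 \approx 2 + 6\frac{n-2}{n-3} + 3\frac{n(n+1)}{(n-1)(n-2)}$ comes from a six-case congestion count, with the $\epsilon_n$ arising from the ratios $\frac{n-2}{n-3}$ and $\frac{n(n+1)}{(n-1)(n-2)}$ (not from the ratio $|S_n\setminus D_n|/|D_n|$). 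You also do not identify the key structural device that makes the coupling tractable: the measures $P_x$ are defined by sequentially inserting the fixed points of $x$ into its nontrivial cycles (equation~\eqref{EqDerangementMeasureRep}), and the ``order indifference'' lemma shows that this defines a single measure independent of the insertion order, so that a pair $x, y=x\tau$ can be coupled by sharing the random insertion targets. Without something like that, ``couple so the resolving moves agree as much as possible'' is not a construction, and the claim that all paths have length $O(1)$ does not follow.

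For the lower bound you propose a genuinely different route than the paper (direct term-by-term comparison rather than the paper's Metropolis interpolation $Q_\epsilon$ and Cauchy interlacing), but there is an arithmetic slip: on an edge $(x,y)\in E$ one has $K(x,y)=\tfrac{1}{2d}$ and $Q(x,y)=\tfrac{1}{d}$, so with $\mu\equiv 1/n!$, $\nu\equiv 1/|D_n|$,
\begin{equation*}
K(x,y)\,\mu(x) = \frac{|D_n|}{2\,n!}\, Q(x,y)\,\nu(x),
\end{equation*}
not $\frac{|D_n|}{n!}$; you asserted $Q(x,y)\le 1/d$ ``so that the $K$ side dominates,'' but in fact $Q(x,y)=2K(x,y)$ and the $K$ side per-edge weight is \emph{smaller}, the lower bound coming only from dropping the nonnegative boundary terms. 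With the factor corrected, the direct argument yields $\mathcal{E}_K(\widehat f,\widehat f)\ge\frac{|D_n|}{2n!}\mathcal{E}_Q(f,f)$. For $n$ even this exceeds $\tfrac{1}{2e}$, but for $n$ odd one has $|D_n|/n! < 1/e$ (by an amount of order $1/(n+1)!$), so the direct edge-by-edge bound is strictly below $\tfrac{1}{2e}$ and does not quite establish the stated inequality. The paper sidesteps this by the interpolation/interlacing argument, which has slack built in via the crude bound $Z\le 2e/n!$. So your lower-bound approach is simpler and close, but as written it is incorrect and even once repaired it proves a marginally weaker constant for odd $n$; if you want to keep the direct route, you should state the constant as $\frac{|D_n|}{2n!}$, or argue that the dropped boundary terms contribute enough to close the $O(1/(n+1)!)$ gap.
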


We will show that this easily gives the following bound on the mixing time, improving earlier bounds of $O \left( n^{3} \log(n) \right)$ \cite{Jian12}:

\begin{cor} [Mixing Properties of the Random Transposition Walk on Derangements] \label{CorDerangementMixing}
The random walk described above has spectral gap satisfying
\begin{equation*}
1 - \beta_{1}(Q) = \Omega \left( \frac{1}{n} \right)
\end{equation*}
and log-Sobolev constant
\begin{equation*}
\alpha(Q) = \Omega \left( \frac{1}{n \log(n)} \right)
\end{equation*}
\end{cor}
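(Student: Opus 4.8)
Here is how I would prove Corollary~\ref{CorDerangementMixing}.

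\smallskip

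\noindent\emph{Plan.} The whole content of the corollary is to plug Theorem~\ref{ThmDerangementMixing} into the spectral-gap and log-Sobolev comparison inequalities recorded just before Lemma~\ref{LemmaVarLogSobComp}, and then to quote the known spectrum of the random transposition walk on $S_n$. In our situation $\mu$ is uniform on $S_n$ and $\nu$ is uniform on $D_n$, so $\nu(y)/\mu(y) = n!/|D_n|$ for every $y \in D_n$. Since $|D_n|/n! = \sum_{k=0}^{n} (-1)^k/k!$ converges to $1/e$ with error at most $1/(n+1)!$, the constant $C := \sup_{y \in D_n} \nu(y)/\mu(y) = n!/|D_n|$ tends to $e$ and is bounded above by an absolute constant (indeed by $3$) for $n \geq 10$. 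By Lemma~\ref{LemmaVarLogSobComp} we may use this $C$ as both $C_1$ and $C_2$. Theorem~\ref{ThmDerangementMixing} supplies, for each $f$ on $D_n$, an extension $\widehat f$ with $\mathcal{E}_K(\widehat f,\widehat f) \leq 22(e+1)(1+\epsilon_n)\,\mathcal{E}_Q(f,f)$ and $|\epsilon_n| \leq 13/n$, so we may take $C_3 = 22(e+1)(1+\epsilon_n)$, likewise bounded above uniformly for $n \geq 10$; note that since we only want $\beta_1$ and $\alpha$ (and not the entire spectrum) we do not need the extension to be linear. Feeding these into $1-\beta_1(Q) \geq (C_1 C_3)^{-1}(1-\beta_1(K))$ and $\alpha(Q) \geq (C_2 C_3)^{-1}\alpha(K)$ reduces the corollary to showing $1-\beta_1(K) = \Omega(1/n)$ and $\alpha(K) = \Omega(1/(n\log n))$, because $c(n) := C\cdot 22(e+1)(1+\epsilon_n) \to 22e(e+1)$ is bounded.

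\smallskip

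\noindent\emph{The base chain.} Write $K = \tfrac12(I + K_0)$, where $K_0$ multiplies by a transposition chosen uniformly among the $\binom{n}{2}$ of them; the $\tfrac12$-laziness is exactly what makes $K$ ergodic, since $K_0$ is bipartite on $S_n$. Because the off-diagonal part of $K$ is $\tfrac12 K_0$ and the stationary distribution is uniform in both cases, $\mathcal{E}_K = \tfrac12\mathcal{E}_{K_0}$, hence $1-\beta_1(K) = \tfrac12\bigl(1-\beta_1(K_0)\bigr)$ and $\alpha(K) = \tfrac12\alpha(K_0)$. By the Diaconis--Shahshahani eigenvalue computation, the largest nontrivial eigenvalue of $K_0$ is $\tfrac{n-3}{n-1}$, attained on the standard representation $(n-1,1)$, so $1-\beta_1(K) = \tfrac1{n-1} = \Omega(1/n)$. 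By the logarithmic Sobolev inequality for the random transposition walk established by Lee and Yau, $\alpha(K_0) = \Omega\!\bigl(1/(n\log n)\bigr)$, hence $\alpha(K) = \Omega\!\bigl(1/(n\log n)\bigr)$. Substituting these into the comparison bounds of the previous paragraph yields $1-\beta_1(Q) = \Omega(1/n)$ and $\alpha(Q) = \Omega(1/(n\log n))$, as claimed.

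\smallskip

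\noindent\emph{Where the work is.} I expect no genuine analytic obstacle here: once Theorem~\ref{ThmDerangementMixing} is in hand, the corollary is pure bookkeeping, and the only ingredient not established by an elementary computation is the Lee--Yau lower bound $\alpha(K_0) = \Omega(1/(n\log n))$ for random transpositions, which must be cited rather than reproved. Two points deserve a line of checking: that both $K$ and $Q$ satisfy the standing hypotheses (reversible, irreducible, ergodic, $\tfrac12$-lazy) so that the quoted comparison inequalities apply verbatim, and that the constant $c(n)$ is genuinely bounded above for all $n \geq 10$ (it is, being a product of $n!/|D_n| \leq 3$ and $22(e+1)(1+13/n)$). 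One may also remark, using the reverse inequality $\mathcal{E}_K(\widehat f,\widehat f) \geq \tfrac1{2e}\mathcal{E}_Q(f,f)$ of Theorem~\ref{ThmDerangementMixing} together with the matching upper estimates for $K$, that these bounds are in fact of the correct order, but only the lower bounds are needed for the statement.
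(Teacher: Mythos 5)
Your proposal is correct and follows essentially the same route as the paper: Lemma~\ref{LemmaVarLogSobComp} with $C = |S_n|/|D_n| \leq 3$ together with the constant $C_3 = 22(e+1)(1+\epsilon_n)$ from Theorem~\ref{ThmDerangementMixing} reduce the corollary to the known spectral gap and the Lee--Yau log-Sobolev estimate for the random transposition walk on $S_n$. The only superficial differences are that you derive $1-\beta_1(K) = \Omega(1/n)$ directly from the Diaconis--Shahshahani eigenvalue computation while the paper cites a survey, and that the paper's displayed constants contain a reciprocal typo, but the substance is identical.
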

By Theorem \ref{ThmLogSobolevMixingBound}, there exists some constant $a > 0$ and function $f(C)$ such that $\lim_{c \rightarrow \infty} f(C) = 0$ and for $t = C n + a n \log(n)^{2}$, $\vert \vert \mathcal{L}(X_{t}) - \pi \vert \vert_{TV} \leq f(C)$.

In section \ref{SecContChain}, bounds similar to Theorem \ref{ThmDirGenChain} are developed for discrete-time chains on continuous state spaces. The development follows the discrete theory closely, much as W. K. Yuen's development of comparison theory on continuous state spaces in \cite{Yuen01} follows the discrete theory in \cite{DiSa93b}. \par 
Next, in section \ref{SecSpecProf}, we briefly discuss how these extension ideas interact with a recent and powerful way of looking at Dirichlet forms, the spectral profile. The main results from \cite{GMT06} will be introduced. They will then be used to prove the following improvement of Theorem \ref{ThmSimpleTorusExample}:

\begin{thm} [Improved Comparison for Random Walk on the Torus with Holes] \label{ThmProfileTorusExample} 
If $X_{t}$ is a Markov chain as described in Theorem \ref{ThmSimpleTorusExample}, we have for $t = C n^{2}$,
\begin{equation*}
\vert \vert \mathcal{L}(X_{t}) - \nu \vert \vert_{TV} \leq f(C)
\end{equation*}
for some function $f$ independant of $n$ and the particular vertices removed, with $\lim_{C \rightarrow \infty} f(C) = 0$.
\end{thm}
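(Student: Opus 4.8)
The plan is to feed the Dirichlet-form comparison of Theorem~\ref{ThmSimpleTorusExample} into the spectral-profile machinery of \cite{GMT06}. The point is that a comparison of Dirichlet forms is insensitive to the size of the support of the test function, so it upgrades automatically from a comparison of spectral gaps to a comparison of spectral \emph{profiles}, once one also controls how much the extension \eqref{EqLinearFamilyExtensions} can enlarge a support. This is exactly what is needed here: the spectral gap alone gives only $1-\beta_{1}(Q)=\Omega(n^{-2})$ and hence, through \eqref{IneqTvSpectralBoundBasic}, a mixing time of order $n^{2}\log n$ (and the log-Sobolev route of Theorem~\ref{ThmLogSobolevMixingBound} still loses a factor), whereas the profile recovers the sharp order $n^{2}$.

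Write $\Lambda_{K}$ and $\Lambda_{Q}$ for the spectral profiles of \cite{GMT06} associated with $K$ and $Q$, built from the Dirichlet eigenvalues $\lambda_{0}(S)=\inf\{\mathcal{E}(g,g)/\|g\|_{2}^{2}:\ g\ \text{supported in}\ S\}$. The first step is a profile comparison of the form $\Lambda_{Q}(r)\ge\tfrac{1}{12}\Lambda_{K}(5r)$. Fix $S\subseteq D:=G\setminus\{v_{1},\dots,v_{m}\}$ with $\nu(S)\le r$ and a function $f$ supported in $S$, and let $\widehat f$ be the extension produced by Theorem~\ref{ThmSimpleTorusExample}; it has the form \eqref{EqLinearFamilyExtensions} in which each deleted vertex spreads its mass only over its four torus-neighbours, all of which lie in $D$ because no two deleted vertices share a $2\times2$ square. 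Hence $\mathrm{supp}(\widehat f)$ is contained in $S$ together with the deleted vertices having a neighbour in $S$, so that $\mu(\mathrm{supp}\,\widehat f)\le 5\,\nu(S)\le 5r$. Combining $\mathcal{E}_{K}(\widehat f,\widehat f)\le 6\,\mathcal{E}_{Q}(f,f)$ (Theorem~\ref{ThmSimpleTorusExample}) with the elementary estimate $\|f\|_{2,\nu}^{2}\le C\,\|\widehat f\|_{2,\mu}^{2}$ obtained exactly as in Lemma~\ref{LemmaVarLogSobComp}, where $C=\sup_{y\in D}\nu(y)/\mu(y)=n^{2}/(n^{2}-m)\le 2$, one gets
\begin{equation*}
\frac{\mathcal{E}_{Q}(f,f)}{\|f\|_{2,\nu}^{2}}\ \ge\ \frac{1}{6C}\,\frac{\mathcal{E}_{K}(\widehat f,\widehat f)}{\|\widehat f\|_{2,\mu}^{2}}\ \ge\ \frac{1}{6C}\,\Lambda_{K}\!\big(\mu(\mathrm{supp}\,\widehat f)\big)\ \ge\ \frac{1}{6C}\,\Lambda_{K}(5r),
\end{equation*}
using that $\Lambda_{K}$ is non-increasing; taking the infimum over $S$ and $f$ gives $\Lambda_{Q}(r)\ge\tfrac{1}{12}\Lambda_{K}(5r)$, with constants independent of $n$ and of the deleted vertices.

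Next I would insert the spectral profile of the lazy simple random walk on $\mathbb{Z}_{n}^{2}$. By the discrete two-dimensional Faber--Krahn (Nash) inequality — every set $S$ with $|S|\le n^{2}/2$ has $\lambda_{0}(S)\ge c_{2}/|S|$, which is the planar isoperimetric inequality tested against the first Dirichlet eigenfunction — one has
\begin{equation*}
\Lambda_{K}(r)\ \ge\ \frac{c_{2}}{n^{2} r}\ \ \text{for}\ r\in\big[\,n^{-2},\tfrac12\,\big],\qquad \Lambda_{K}(r)=\Lambda_{K}(\tfrac12)\ \ge\ \frac{2c_{2}}{n^{2}}\ \ \text{for}\ r>\tfrac12,
\end{equation*}
so that $\Lambda_{Q}(r)\ge c_{3}\min\{(n^{2}r)^{-1},\,n^{-2}\}$ over the whole range. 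Finally I would quote the mixing estimate of \cite{GMT06}: the $L^{2}$ distance of $\mathcal{L}(X_{t})$ from $\nu$ — hence, since total-variation distance is at most half the $L^{2}$ distance, also $\|\mathcal{L}(X_{t})-\nu\|_{TV}$ — falls below $\epsilon$ once
\begin{equation*}
t\ \ge\ \mathrm{const}\cdot\int_{4\nu_{\min}}^{4/\epsilon}\frac{dr}{r\,\Lambda_{Q}(r)},\qquad \nu_{\min}=\frac{1}{n^{2}-m}.
\end{equation*}
Since $4\nu_{\min}\ge n^{-2}$, the bound on $\Lambda_{Q}$ applies on the whole interval; the integrand is then $O(n^{2})$ for $r\in[n^{-2},\tfrac12]$, contributing $O(n^{2})$ with no logarithm, and is $O(n^{2}/r)$ for $r>\tfrac12$, contributing $O(n^{2}\log(1/\epsilon))$. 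Thus the integral is at most $C(\epsilon)\,n^{2}$ with $C(\epsilon)=O(\log(1/\epsilon))$; taking $t=Cn^{2}$ and letting $\epsilon$ be the largest value with $C(\epsilon)\le C$ yields a function $f$ with $f(C)\to0$ as $C\to\infty$, depending on neither $n$ nor the particular vertices removed.

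The hard part is obtaining $\Lambda_{K}$ with the correct $(n^{2}r)^{-1}$ shape rather than the flat $n^{-2}$ that a spectral-gap bound would force: it is exactly this $r$-dependence — the genuine two-dimensional isoperimetry of $\mathbb{Z}_{n}^{2}$, and its flattening to the gap scale as $r\uparrow\tfrac12$ — that removes the spurious $\log n$ in the mixing corollary of Theorem~\ref{ThmSimpleTorusExample}. The remainder is bookkeeping: checking that the support-enlargement factor~$5$, the norm constant $C$, and the Faber--Krahn constant $c_{2}$ are all uniform in $n$ and in $\{v_{1},\dots,v_{m}\}$, so that the resulting $f$ inherits that uniformity.
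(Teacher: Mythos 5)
Your overall strategy is the same as the paper's: establish a comparison of spectral profiles $\Lambda_{Q}(r)\gtrsim\Lambda_{K}(\text{const}\cdot r)$ by combining the Dirichlet-form bound of Theorem~\ref{ThmSimpleTorusExample} with an $L^{2}$/variance comparison and a control on the growth of supports under the extension map, then feed a sharp $(n^{2}r)^{-1}$-type bound on the base chain's profile into the integral criterion of \cite{GMT06}. Your inline derivation of the profile comparison is exactly the content of Lemma~\ref{LemmaSpecProfComp}, and your observation about controlling $\mu(\mathrm{supp}\,\widehat f)$ (your factor $5$, the paper's $C_{2}\le 4$) plays the role of the paper's $C_{2}$.

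The one substantive divergence is how the profile of the lazy walk on $\mathbb{Z}_{n}^{2}$ is bounded. The paper goes through a specific chain: the torus has $(\tfrac12,2)$ moderate growth \cite{DiSa94}, it satisfies a local Poincar\'e inequality with constant $8$ \cite{DiSa96b}, and the estimate following equation 4.3 of \cite{GMT06} converts these into $\widehat{\Lambda}(r)\ge(\tfrac{8}{27r}-1)\tfrac{1}{2n^{2}}$. You instead invoke a discrete two-dimensional Faber--Krahn/Nash inequality, $\lambda_{0}(S)\ge c_{2}/|S|$ for $|S|\le n^{2}/2$, asserting it ``is the planar isoperimetric inequality tested against the first Dirichlet eigenfunction.'' The two routes are equivalent in substance — moderate growth plus local Poincar\'e is precisely the hypothesis under which \cite{GMT06} deliver a Nash/Faber--Krahn type profile bound — but your version is stated without a precise reference or proof, and the asserted one-line justification is not a proof (the discrete Faber--Krahn inequality on a graph requires an argument, not just ``testing the isoperimetric inequality''). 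Filling this in amounts to citing the same literature the paper does, or else proving the Nash inequality directly; either way it is the one non-formal step in your write-up. Two small notational remarks: the paper's $\lambda(S)$ from \cite{GMT06} has variance $V_{\nu}(f)$, not $\|f\|_{2}^{2}$, in the denominator (these are within a factor $2$ of each other on sets of measure at most $\tfrac12$, so your argument survives, but state this); and the passage from an $L^{2}$ bound to total variation should be phrased with the weighted $\chi^{2}$/$L^{2}(\nu)$ norm of \cite{GMT06} rather than an unweighted $L^{2}$ norm.
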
\par 

In particular, these random walks have a mixing time that is $O(n^{2})$. This is a substantial improvement on the bound of $O(n^{2} \log(n))$ obtained by a direct application of inequality \eqref{IneqTvSpectralBoundBasic}, and a small improvement on the bound of $O(n^{2} \log ( \log(n)))$ found by a careful application of Theorem \ref{ThmLogSobolevMixingBound}.

\section{Spectral Gap and Log-Sobolev Estimates}
In this section, we prove Theorem \ref{ThmDirGenChain}:

\begin{proof}
Assume without loss of generality that no paths contain repeated edges, and write
\begin{align*}
\mathcal{E}_{K}(\widehat{f}, \widehat{f}) &= \frac{1}{2} \sum_{x,y \in \widehat{\Omega}} \vert \widehat{f}(x) - \widehat{f}(y) \vert^{2} K(x,y) \mu(x) \\
&=\frac{1}{2} \sum_{x,y \in \Omega} \vert f(x) - f(y) \vert^{2} K(x,y) \mu(x) +  \sum_{ x \in \Omega, y \notin \Omega} \vert f(x) - \widehat{f}(y) \vert^{2} K(x,y) \mu(x) \\
&+ \frac{1}{2} \sum_{ x,y \notin \Omega} \vert \widehat{f}(x) - \widehat{f}(y) \vert^{2} K(x,y) \mu(x) \\
&\equiv \frac{1}{2} R_{1} + R_{2} + \frac{1}{2} R_{3}
\end{align*}

The goal is to compare this to $\mathcal{E}_{Q}(f,f) = \frac{1}{2} \sum_{x,y \in \Omega} \vert f(x) - f(y) ) \vert^{2} Q(x,y) \nu(x)$. We begin by looking at $R_{1}$:

\begin{align*}
R_{1} &= \sum_{x,y \in \Omega} \vert f(x) - f(y) \vert^{2} K(x,y) \mu(x) \\
&= \sum_{x,y \in \Omega} \left\vert \sum_{\gamma \in \Gamma_{x,y}} F[\gamma] \sum_{i=0}^{k[\gamma] - 1} (f(v_{x,y,i+1}) - f(v_{x,y,i})) \right\vert^{2} K(x,y) \mu(x) \\
&\leq \sum_{x,y \in \Omega}  \sum_{\gamma \in \Gamma_{x,y}} F[\gamma] \left\vert \sum_{i=0}^{k[\gamma] - 1} (f(v_{x,y,i+1}) - f(v_{x,y,i})) \right\vert^{2}  K(x,y) \mu(x)  \\
&\leq  \sum_{x,y \in \Omega}  \sum_{\gamma \in \Gamma_{x,y}} F[\gamma] k[\gamma] \sum_{i=0}^{k[\gamma] - 1} (f(v_{x,y,i+1}) - f(v_{x,y,i}))^{2}  K(x,y) \mu(x) 
\end{align*}
And so the coefficient $[(f(q) - f(r))^{2}] R_{1}$ of $(f(q) - f(r))^{2}$ in $R_{1}$ is at most
\begin{equation} \label{IneqR1A}
[(f(q) - f(r))^{2}] R_{1} \leq \sum_{\gamma \ni (q,r)} F[\gamma] k[\gamma] K(i(\gamma),o(\gamma)) \mu(i(\gamma))
\end{equation}

The next step is to bound $R_{2}$, which depends on the measures $P_{x}$ and flow $F$, though not on the couplings $P_{x,y}$. Write:

\begin{align*}
R_{2} &= \sum_{x \in \Omega, y \notin \Omega} \vert f(x) - \widehat{f}(y) \vert^{2} K(x,y) \mu(x) \\
&= \sum_{x \in \Omega, y \notin \Omega} \left\vert \sum_{z \in \Omega} P_{y}[z] (f(x) - f(z)) \right\vert^{2} K(x,y) \mu(x) \\
&\leq \sum_{x \in \Omega, y \notin \Omega} \sum_{z \in \Omega} P_{y}[z] (f(x) - f(z))^{2} K(x,y) \mu(x)\\
\end{align*}

where the last inequality is Cauchy-Schwarz. The next step is to write $(f(x) - f(z))^{2}$ in terms of differences which appear in $S$. To do so, note that

\begin{equation} \label{IneqCongestionBound1}
\begin{aligned}  
(f(x) - f(z))^{2} &= \left( \sum_{\gamma \in \Gamma_{x,z}} F[\gamma] \sum_{i=0}^{k[\gamma] - 1} ( f(v_{x,z,i+1}) - f(v_{x,z,i}) ) \right)^{2} \\
&\leq \sum_{\gamma \in \Gamma_{x,z}} F[\gamma] \left( \sum_{i=0}^{k[\gamma] - 1} ( f(v_{x,z,i+1}) - f(v_{x,z,i}) ) \right)^{2} \\
&\leq \sum_{\gamma \in \Gamma_{x,z}} F[\gamma] k[\gamma] \sum_{i=0}^{k[\gamma] - 1} ( f(v_{x,z,i+1}) - f(v_{x,z,i}) )^{2}
\end{aligned}
\end{equation}
where both inequalities are Cauchy-Schwarz. From this bound, the coefficient of $(f(q) - f(r))^{2}$ in $R_{2}$ is at most

\begin{align} \label{IneqR2A}
[(f(q) - f(r))^{2}]R_{2} \leq \sum_{\gamma \ni (q,r)} k[\gamma] F[\gamma] \sum_{y \notin G} P_{y}[o(\gamma)] K(i(\gamma),y) \mu(i(\gamma))
\end{align}  

Finally, it is necessary to bound $R_{3}$. Write
\begin{align*}
R_{3} &= \sum_{x,y \in \widehat{\Omega} \backslash \Omega} \vert \widehat{f}(x) - \widehat{f}(y) \vert^{2} K(x,y) \mu(x)\\
&= \sum_{ x,y \in \widehat{\Omega} \backslash \Omega} \left\vert \sum_{a \in \Omega} P_{x}[a] f(a) - \sum_{b \in \Omega} P_{y}[b] f(b) \right\vert^{2} K(x,y) \mu(x)\\
&= \sum_{ x,y \in \widehat{\Omega} \backslash \Omega} \left\vert \sum_{a,b \in \Omega} P_{x,y}[a,b] (f(a) - f(b)) \right\vert^{2} K(x,y) \mu(x) \\
&\leq \sum_{ x,y \in \widehat{\Omega} \backslash \Omega} \sum_{a,b \in \Omega} P_{x,y}[a,b] (f(a) - f(b))^{2} K(x,y) \mu(x)
\end{align*}
Using inequality \eqref{IneqCongestionBound1} above, this gives
\begin{equation} \label{IneqR3A}
R_{3} \leq \sum_{ x,y \in \widehat{\Omega} \backslash \Omega} \sum_{a,b \in \Omega} P_{x,y}[a,b] \sum_{\gamma \in \Gamma_{a,b}} F[\gamma] k[\gamma] \sum_{i=0}^{k(\gamma) - 1} ( f(v_{a,b,i+1}) - f(v_{a,b,i}) )^{2} K(x,y) \mu(x)
\end{equation}

In particular, the coefficient of $(f(q) - f(r))^{2}$ in this upper bound is
\begin{align*}
[(f(q) - f(r))^{2}]R_{3} \leq \sum_{\gamma \ni (q,r)} F[\gamma] k[\gamma] \sum_{(x,y) \in \widehat{E}, x,y \notin \Omega} P_{x,y}[i(\gamma),o(\gamma)] K(x,y) \mu(x)
\end{align*}  

Combining inequalities \eqref{IneqR1A}, \eqref{IneqR2A} and \eqref{IneqR3A}, the coefficient of $(f(q) - f(r))^{2}$ in $R_{1} + 2 R_{2} + R_{3}$ is bounded by

\begin{align*}
[(f(q) - f(r))^{2}](R_{1} + 2 R_{2} + R_{3}) &\leq  \sum_{\gamma \ni (q,r)} F[\gamma] k[\gamma] K(i(\gamma),o(\gamma)) \mu(i(\gamma)) \\
&+ 2  \sum_{\gamma \ni (q,r)} k[\gamma] F[\gamma] \sum_{y \notin G} P_{y}[o(\gamma)] K(i(\gamma),y) \mu(i(\gamma)) \\
&+ \sum_{\gamma \ni (q,r)} F[\gamma] k[\gamma] \sum_{(x,y) \in \widehat{E}, x,y \notin G} P_{x,y}[i(\gamma),o(\gamma)] K(x,y) \mu(x) 
\end{align*}

On the other hand, the coefficient of $(f(q) - f(r))^{2}$ in $\mathcal{E}_{Q}(f,f)$ is at least $Q(q,r) \nu(q)$. Thus, setting

\begin{align*}
\mathcal{A} &= \sup_{Q(q,r) >0}  \frac{1}{Q(q,r) \nu(q)} ( \sum_{\gamma \ni (q,r)} F[\gamma] k[\gamma] K(i(\gamma),o(\gamma)) \mu(i(\gamma)) \\
&+ 2  \sum_{\gamma \ni (q,r)} k[\gamma] F[\gamma] \sum_{y \notin G} P_{y}[o(\gamma)] K(i(\gamma),y) \mu(i(\gamma)) \\
&+ \sum_{\gamma \ni (q,r)} F[\gamma] k[\gamma] \sum_{(x,y) \in \widehat{E}, x,y \notin G} P_{x,y}[i(\gamma),o(\gamma)] K(x,y) \mu(x) )
\end{align*}

we have

\begin{equation*}
\mathcal{E}_{K}(\widehat{f}, \widehat{f}) \leq \mathcal{A} \mathcal{E}_{Q}(f,f)
\end{equation*}
which completes the proof.

\end{proof}

Theorem \ref{ThmDirCompSrw} is an immediate corollary.

\section{Simple Examples}

In this section, we present brief proofs of Theorems \ref{ThmSimpleTorusExample} and \ref{ThmBottleneckTorusExample}, which follow quickly from Theorem \ref{ThmDirCompSrw}. We begin with Theorem \ref{ThmSimpleTorusExample}. \par 

\begin{proof}
First, we define the measures. If $x \in \{ v_{1}, v_{2}, \ldots, v_{m} \}$, let

\begin{equation*}
P_{x}[a] = \left\{
		\begin{array}{ll}
			\frac{1}{4} & \mbox{if } (x,a) \in \widehat{E} \\
			0 & \mbox{otherwise }
		\end{array}
	\right.
\end{equation*} 

By assumption, no two vertices in $\{ v_{1}, v_{2}, \ldots, v_{m} \}$ are adjacent, so there are no choices to make when defining the couplings $P_{x,y}$. To define the flow, note that $P_{x,y}[a,b] > 0$ only in three situations. We describe each situation up to swapping coordinates and reflections about rows and columns: \\ \par 
\textbf{Case 1:} $(x,y) = (a,b)$. In this case, $(a, b) \in E$, so define the flow to be concentrated on that single edge. \\ \par 
\textbf{Case 2:} $x = (i,j) \in G$, $y$ of the form $(i+1,j) \notin G$, $a = x$, and $b$ of the form $(i+1,j+1)$. In this case, define the flow to be concentrated on the path $\{ ((i,j), (i,j+1)), ((i,j+1), (i+1,j+1)) \}$. \\ \par 
\textbf{Case 3:} $x = (i,j) \in G$, $y$ of the form $(i+1,j) \notin G$, $a = x$, and $b$ of the form $(i+2,j)$. In this case, there are two length 4 paths between $a$ and $b$, of the form $\{ ((i,j), (i,j+1)), ((i,j+1), (i+1,j+1)), ((i+1,j+1), (i+2,j+1)), ((i+2, j+1), (i+2,j)) \}$ and $\{ ((i,j), (i,j-1)), ((i,j-1), (i+1,j-1)), ((i+1,j-1), (i+2,j-1)), ((i+2, j-1), (i+2,j)) \}$. The flow should put equal weight on both. \par 
Then, note that any edge can be in at most 1 path associated with case 1, 2 paths associated with case 2, and 4 paths associated with case 3. Thus, $\mathcal{A} \leq \frac{n-m}{n}(1 + (2)(2) \frac{1}{4} + (4)(4) \frac{1}{4}) = 6(1 - \frac{m}{n})$.

\end{proof}

As mentioned in the introduction, this bound translates immediately into an $O(n^{2} \log(n))$ bound on the Total Variation mixing time, using inequality \eqref{IneqTvSpectralBoundBasic}. Unfortunately, although Theorem \ref{ThmSimpleTorusExample} can be used to get very good control on the entire spectrum of the associated walk as per the comments immediately preceding Lemma \ref{LemmaVarLogSobComp}, the lack of symmetry in the problem makes it difficult to use the smaller eigenvalues to actually improve our estimate of the mixing time. In section \ref{SecSpecProf}, we will avoid this problem and find the right bound up to the coefficient of the leading term using the spectral profile. \par 

The proof of Theorem \ref{ThmBottleneckTorusExample} is similar:

\begin{proof}
We begin by defining the measures. For $x \in \{ v_{1}, v_{2}, \ldots, v_{m} \}$, define 

\begin{equation*}
P_{x}[a] = \left\{
		\begin{array}{ll}
			\frac{1}{4} & \mbox{if } (x,a) \in \widehat{E} \\
			0 & \mbox{otherwise }
		\end{array}
	\right.
\end{equation*} 

By assumption, no two vertices in $\{ v_{1}, v_{2}, \ldots, v_{m} \}$ are adjacent, so it isn't necessary to define any couplings. To define the flow, put the entire weight on one minimal length path. Since the number of pairs $(a,b)$ with $a$ and $y$ not adjacent but $P_{x,y}[a,b] > 0$ for some $x,y$ is at most $2n$, and the maximal path length is clearly at most $4n$, we can write $\mathcal{A} \leq 8 n^{2}$.
\end{proof}

More importantly, it seems impossible to substantially improve this bound with another comparison to simple random walk on the torus. The missing vertices effectively divide the torus into two regions. There must be at least $\Omega(n)$ paths going between the two regions, and the median path length must also be at least $\Omega(n)$. Since $O(1)$ edges between the two regions exist, any path argument gives $\mathcal{A} = \Omega(n^{2})$. \par 

\section{The Random Transposition Walk on Derangements}

This section contains the proofs of Theorem \ref{ThmDerangementMixing} and Corollary \ref{CorDerangementMixing}. The proof is based on an application of Theorem \ref{ThmDirCompSrw}, and the strategy is quite simple. Say that $\tau$ is an \textit{extension} of $\sigma$ if every cycle of $\sigma$ is contained in some cycle of $\tau$, when cycles are viewed as subsets of $[n]$. Roughly, for $x \in S_{n} \backslash D_{n}$, we will define measures $P_{x}$ supported on $D_{n}$ which are fairly uniform on derangements that are extensions of $x$. We will then find a coupling $P_{x,y}[\sigma,\tau]$ of $P_{x}[\sigma]$ and $P_{y}[\tau]$ so that if $P_{x,y}[\sigma, \tau] > 0$, there will be a sequence of derangements of length at most 4, starting with $\sigma$ and ending with $\tau$, where adjacent derangements differ by a single transposition. Finally, the flows will be supported on these minimal-length paths. To complete the proof, we will describe for any fixed pair $q,r$ of derangements all pairs $x,y$ and all pairs $\sigma,\tau$ so that the edge $(q,r)$ is in a path from $\sigma$ to $\tau$ with $P_{x,y}[\sigma,\tau] > 0$. Most of the details of the proof consist of examining a relatively large number of simple cases. For this reason, we omit some details for cases very similar to those already discussed and instead have only references to the first time the calculation is done. \par 

We begin by setting some notation. Let $S_{n}$ and $D_{n}$ be the collection of permutations and derangements on $[n]$, respectively. For $x \in S_{n}$, define $Fix(x) = \{ i \in [n] \, : \, x[i] = i \}$ to be the fixed points of $x$. We will multiply permutations from left to right, so that e.g. $(1,3)(1,2) = (1,3,2)$. Finally, for $\sigma \in S_{n}$ and any subset $S$ of $n$ which is exactly the union of cycles of $\sigma$, we will denote by $\sigma \vert_{S}$ the restriction of $\sigma$ to $S$. For example, $(125)(34)\vert_{\{1,2,5\} } = (125)$. This is often useful for writing down explicit paths along which most such restrictions don't change. \par 

The next step is to describe the measures $P_{x}$ for $x \in S_{n} \backslash D_{n}$, their couplings $P_{x,y}$ for $x,y \in S_{n} \backslash D_{n}$, and flows $G_{\sigma, \tau}$ for pairs $\sigma, \tau \in D_{n}$ with $P_{x,y}[\sigma, \tau] > 0$ for some pair $x,y \in S_{n}$. We begin with the measures $P_{x}[\sigma]$. Let $Fix(x) = F = (f_{1}, \ldots, f_{m})$, and write the remaining cycles of $x$ as $x = C_{1} C_{2} \ldots C_{k}$, where $C_{i} = (p_{i,1}, p_{i,2}, \ldots, p_{i, \ell(i)})$, with $\ell(i) \geq 2$. \par 

We now write several measures associated with $x \neq id$. Fix $z \in S_{m}$, and construct $\sigma$ distributed according to $P_{x}^{(z)}$ as follows. For $1 \leq i \leq m$, let $a_{i}$ be chosen uniformly in $[n] \backslash \cup_{j=i}^{m} \{ f_{z(j)} \}$. Then write 

\begin{equation} \label{EqDerangementMeasureRep}
\sigma = x (a_{1}, f_{z(1)}) (a_{2}, f_{z(2)}) \ldots (a_{m}, f_{z(m)})
\end{equation}

Note that the $i$'th transposition $(a_{i}, f_{z(i)})$ is the first time that $f_{z(i)}$ appears in the sequence $( x \prod_{i=1}^{j} (a_{i}, f_{z(i)}) )_{j=0}^{m}$, and so the $j$'th term in that sequence is obtained from the $j-1$'st by adding $f_{\kappa(j)}$ to the cycle containing $a_{z(j)}$. In particular, no cycles are split during this iterative construction. This defines a measure $P_{x}^{(z)}$ concentrated on $D_{n}$. It is worth noting that these measures aren't very uniform. For example, if $Fix(x) = \{ a, b \}$ and $P_{x}[\sigma] > 0$, then $(a,b)$ is not in the cycle decomposition of $\sigma$; if $\vert Fix(x) \vert = n-2$, $P_{x}^{(z)}$ is concentrated on $n$-cycles. We're willing to give up some uniformity to gain the following lemma, which is very useful for constructing couplings:

\begin{lemma} [Order Indifference] For $x \in S_{n} \backslash D_{n}$ with $Fix(x) = \{ f_{1}, f_{2}, \ldots, f_{m} \} \neq [n]$, $A \subset D_{n}$, and $z, z' \in S_{m}$, we have $P_{x}^{(z)}[A] = P_{x}^{(z')}[A]$.
\end{lemma}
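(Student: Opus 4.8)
The plan is to show that the law of the derangement $\sigma$ built by the recipe in \eqref{EqDerangementMeasureRep} does not depend on the order $z$ in which the fixed points $f_1,\dots,f_m$ are processed. Since every permutation $z\in S_m$ is a product of adjacent transpositions, it suffices to prove $P_x^{(z)}[A]=P_x^{(z')}[A]$ when $z'$ is obtained from $z$ by swapping two consecutive entries; composing these swaps then gives the general statement. So I would reduce immediately to the case $m=2$, with fixed points $f_1,f_2$ and the two orders $(f_1,f_2)$ and $(f_2,f_1)$, because the transpositions attached to all the other fixed points (those processed strictly before or strictly after the swapped pair) are applied to the same partially-built permutation in both orders, and one can condition on the choices made for those steps. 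In effect, after conditioning, we must compare the two distributions on $D_n$ given by
\begin{align*}
\sigma &= w\,(a_1,f_1)(a_2,f_2), \qquad a_1 \text{ uniform in } [n]\setminus\{f_1,f_2\},\ a_2 \text{ uniform in } [n]\setminus\{f_2\},\\
\sigma' &= w\,(b_1,f_2)(b_2,f_1), \qquad b_1 \text{ uniform in } [n]\setminus\{f_1,f_2\},\ b_2 \text{ uniform in } [n]\setminus\{f_1\},
\end{align*}
where $w$ is a fixed permutation whose only fixed points are $f_1$ and $f_2$.

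The key computation is then a bijection argument on the $(n-2)\cdot(n-1)$ equally likely pairs on each side. Given a target permutation $\tau\in D_n$ (one with no fixed points, obtainable from $w$ by the construction), I would determine exactly which pairs $(a_1,a_2)$ produce $\tau=w(a_1,f_1)(a_2,f_2)$, and likewise which $(b_1,b_2)$ produce $\tau=w(b_1,f_2)(b_2,f_1)$, and check the two counts agree. The point of the "first appearance" remark in the text is that in the sequence $w,\ w(a_1,f_1),\ w(a_1,f_1)(a_2,f_2)$ no cycle is ever split: the first transposition inserts $f_1$ into the cycle of $w$ through $a_1$, and the second inserts $f_2$ into whichever cycle currently contains $a_2$ (possibly the one just enlarged). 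So from $\tau$ one reads off $f_1$ and $f_2$ as two of its entries, and the admissible $(a_1,a_2)$ are recovered by asking: which element $a_1$, when $f_1$ is deleted from $\tau$'s cycles, sits just "before" $f_1$ — no, more precisely, undoing $(a_2,f_2)$ first removes $f_2$ from $\tau$ to get an intermediate permutation on $[n]\setminus\{f_2\}$-structure, with $a_2$ being the pre-image adjacency, and then undoing $(a_1,f_1)$ removes $f_1$. Running the same analysis with the roles of $f_1,f_2$ reversed, one finds the set of valid $(b_1,b_2)$ is in explicit bijection with the set of valid $(a_1,a_2)$ — essentially because deleting $f_1$ then $f_2$ from the cycle structure of $\tau$, versus deleting $f_2$ then $f_1$, reaches the same reduced object, and the removed-adjacency data transforms bijectively. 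Counting cases according to whether $f_1$ and $f_2$ lie in the same cycle of $\tau$ and whether they are cyclically adjacent there handles the book-keeping.

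I expect the main obstacle to be exactly this case analysis in the $m=2$ reduction: carefully tracking, for a given $\tau$, how many $(a_1,a_2)$ yield it, since the count can differ pair-by-pair between configurations even though the totals match, and one must be sure the uniform choices on the slightly different sets $[n]\setminus\{f_1,f_2\}$ and $[n]\setminus\{f_2\}$ versus $[n]\setminus\{f_1,f_2\}$ and $[n]\setminus\{f_1\}$ really do produce equal probabilities. An alternative, possibly cleaner, route that I would try in parallel is to describe $P_x^{(z)}$ intrinsically without reference to $z$ at all: argue that $\sigma$ has the law of "insert $f_1,\dots,f_m$ one at a time into the cycle structure of $x$, each time placing the new point uniformly among all currently available 'slots'," and show the recipe \eqref{EqDerangementMeasureRep} realizes this slot-insertion process regardless of order — exchangeability of the insertion order for such sequential uniform-slot constructions is a standard fact (it underlies the Chinese-restaurant/Feller coupling) and would give the lemma in one stroke. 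Whichever description is used, the conclusion $P_x^{(z)}[A]=P_x^{(z')}[A]$ for all $A\subset D_n$ follows.
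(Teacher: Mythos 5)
Your proposal would work in the end, but it overlooks the one observation that makes the paper's argument almost immediate. For fixed $z$, the sequence $(a_1,\dots,a_m)$ is \emph{uniquely recoverable} from the output $\sigma$: at the last step $a_m = \sigma(f_{z(m)})$, and after undoing that transposition one reads off $a_{m-1}$, and so on. Consequently $P_x^{(z)}[\sigma]$ takes only the two values $0$ and $\frac{(n-m-1)!}{(n-1)!}$ — that is, $P_x^{(z)}$ is \emph{uniform on its support} — and the lemma reduces to the purely combinatorial claim that the support does not depend on $z$. The paper then characterizes that support intrinsically (independently of any ordering) as the set of derangements obtained by slotting the elements of $Fix(x)$ into the non-trivial cycles of $x$, and is done. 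This is essentially your ``alternative, possibly cleaner'' second route, except that once uniformity is in hand no appeal to exchangeability of a sequential slot-insertion process is needed; one just compares two uniform measures with the same support.

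Your primary route (reduce to adjacent swaps of $z$, then count which $(a_1,a_2)$ versus $(b_1,b_2)$ produce a given $\tau$) is not wrong, but the worry you flag — that ``the count can differ pair-by-pair between configurations even though the totals match'' — is unfounded precisely because of the uniqueness above: each count is $0$ or $1$. So the bijection you'd be constructing is really just a verification that the supports coincide, and the adjacent-transposition scaffolding buys nothing. I'd recommend proving the uniqueness-of-recovery claim first; it collapses both of your sketches into a two-line argument.
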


Observe that, under any ordering, $P_{x}^{(z)}[\sigma] \in \{ 0, \frac{(n-m-1)!}{(n-1)!} \}$, since each obtainable element can be obtained in a unique way. Next, observe that the supports of $P_{x}^{(z)}$ and $P_{x}^{(z')}$ are the same. In cycle notation, they consist of exactly the derangements that can be obtained by slotting the elements of $Fix(x)$ into the non-trivial cycles of $x$. $\square$ \\ \par 

Using this lemma, define $P_{x}$ to be the single measure $P_{x}^{(z)}$ for some (any) ordering $z$. If $x = id$, then let $P_{id}$ be uniform on $n$-cycles. These distributions are biased in the sense discussed immediately before the lemma. However, we'll see that they are `most biased' for permutations with a large number of fixed points, and there aren't enough of those to be significant. \par 

Having described the measures $P_{x}$, it is necessary to find couplings $P_{x,y}$ for $x,y$ adjacent. The goal will be to ensure that if $P_{x,y}[\sigma,\tau] > 0$, then the distance between $\sigma$ and $\tau$ is small as measured by the graph metric on $D_{n}$ induced by the kernel $Q$. Note that, since $x$ and $y$ are only a transposition away, we can assume without loss of generality that $Fix(x) \subset Fix(y)$.  \par 

This observations gives an easy way to construct a coupling, as long as $x,y \neq id$. Under the representation of $P_{x}$ given by equation \eqref{EqDerangementMeasureRep}, we order $Fix(x)$, then order the elements of $Fix(y)$ to put the elements in $Fix(y) \backslash Fix(x)$ at the front, and the remaining elements in the same order as given in $Fix(x)$. Then let $\{ a_{i} \}_{i=1}^{\vert Fix(y) \vert}$ be the random variables used to build $\tau$ from $P_{y}$ in representation \eqref{EqDerangementMeasureRep}. Construct $\sigma$ from $P_{x}$ using the same choices for $a_{i}$ in representation equation \eqref{EqDerangementMeasureRep} for all $i > \vert Fix(y) \vert - \vert Fix(x) \vert$. This defines the coupling for $x,y \neq id$. For $y = id$ and $x = (i,j)$, we can observe that $P_{id} = P_{(i,j)}$, so we choose the obvious coupling $P_{id, (i,j)}[\sigma,\tau] = P_{id}(\sigma) \textbf{1}_{\sigma=\tau}$. \par 

The next step is to define flows between all pairs $\sigma, \tau \in D_{n}$ such that $P_{x,y}[\sigma, \tau] > 0$ for at least some pair $x,y \in S_{n}$. We will often use the shorthand "assign weight $\beta$ to path $\gamma_{\sigma, \tau}$" for "set $G_{\sigma, \tau}[\gamma_{\sigma,\tau}] = \beta$." \\ \par

\textbf{Case 1:} $x, y \in D_{n}$. In this case, $P_{x,y}[\sigma, \tau] = \delta_{x,y}[\sigma, \tau]$. There is an edge between $\sigma$ and $\tau$, and so we assign weight 1 to that length-1 path. \\ \par 

\textbf{Case 2:}  $x \in D_{n}$, $\vert Fix(y) \vert = 1$. Assume without loss of generality that $Fix(y) = i$. Thus, $x = y (i,j)$ for some $j$. In this case, $P_{x,y} [\sigma, \tau] > 0$ if and only if $\sigma = x = y (i,j)$ and $\tau = y (i,k)$ for some $k \neq j$. Thus, we need to create a flow from $y (i,j)$ to $y (i,k)$ for all permutations $y$ with unique fixed point $i$ and all distinct $j,k \neq i$. For parity reasons, it is clear that there are no paths of length 1. Let $C_{j}$ and $C_{k}$ be the cycles containing $j$ and $k$ respectively in $y$. By assumption, these have size at least 2, so write $C_{j} = (j, a_{1}, \ldots, a_{\ell(j)})$ and $C_{k} = (k, b_{1}, \ldots, b_{\ell(k)})$. There are three subcases to consider: \par 
\textbf{Case 2A:} Assume first that $C_{j} \neq C_{k}$. Then $\{ ( y (i,j), y(i,j)(j,k)), (y(i,j)(j,k), y(i,j)(j,k)(i,j) = y(i,k)) \}$ is a length-two path between $y(i,j)$ and $y(i,k)$, and all vertices are clearly in $D_{n}$. Assign weight 1 to this path. \par
\textbf{Case 2B:} $C_{j} = C_{k}$, with $\vert C_{j} \vert > 2$. In this case, write $C = C_{j} = C_{k} = (j, a_{1}, \ldots, a_{\ell(j)}, k, b_{1}, \ldots, b_{\ell(k)})$. If $\ell(j) > 0$, the path described in case 2A remains in $D_{n}$, and so we assign weight 1 to this path. If $\ell(k) > 0$, an analogous path with $j$'s and $k$'s switched and arrows reversed will remain in $D_{n}$. Since $\vert C_{j} \vert > 2$, we have either $\ell(j) > 0$ or $\ell(k) > 0$. Thus, the only remaining case is: \par 
\textbf{Case 2C:} $C_{j} = C_{k} = (j,k)$. In this case, we will use the assumption that $n \geq 5$. Choose $h \in [n] / \{i,j,k\}$, and write $C_{h} = (h, c_{1}, \ldots, c_{\ell(h)})$ for some $\ell(h) \geq 1$. Also write $S = \{ i,j,k,h, c_{1},\ldots, c_{\ell(h)} \}$. We calculate:
\begin{align*}
y(i,j)\vert_{S} &= (i,j,k)(h, c_{1},\ldots,c_{\ell(h)}) \\
y(i,j)(i,h)\vert_{S} &= (i,j,k,h,c_{1},\ldots,c_{\ell(h)}) \\
y(i,j)(i,h)(j,h)\vert_{S} &= (j,k)(h,c_{1},\ldots,c_{\ell(h)}) \\
y(i,j)(i,h)(j,h)(k,h)\vert_{S} &= (i,k,j,h,c_{1},\ldots,c_{\ell(h)}) \\
y(i,j)(i,h)(j,h)(k,h)(i,h)\vert_{S} &= \tau
\end{align*}

The five permutations described above, when restricted to $S^{c}$, are all equal. These five permutations, without restriction to $S$, are all in $D_{n}$, and so for each $h \in [n] / \{i,j,k\}$ this sequence defines a length-4 path from $\sigma$ to $\tau$. In this case, we put weight $\frac{1}{n-3}$ on each of these paths from $\sigma$ to $\tau$. \\ \par 

\textbf{Case 3:} $x \in D_{n}$, $\vert Fix(y) \vert = 2$. Without loss of generality, write $Fix(y) = \{ a, b \}$, so that $x = y (a,b)$. Note also that since $x \in D_{n}$, $P_{x}[\sigma] = \delta_{x}[\sigma]$. Write $y$ in cycle notation as $y = (p_{1,1}, \ldots, p_{1, l(1)})(p_{2,1}, \ldots, p_{2, \ell(2)}) \ldots (p_{k, 1}, \ldots, p_{k, \ell(k)})(a)(b)$, where $\ell(i)$ is the length of the $i$'th longest cycle, with ties broken lexicographically by smallest element. If $P_{y}[\tau] > 0$, we can write $\tau = y (a, p_{i(a), j(a)}) (b, p_{i(b), j(b)})$ or $\tau = y (a, p_{i(a), j(a)})(b,a)$, where in both cases $1 \leq i(a), i(b) \leq k$, $1 \leq j(a) \leq \ell(i(a))$, $1 \leq j(b) \leq \ell(i(b))$. This leads to three types of paths. \par 

\textbf{Case 3A:} $\tau = y (a, p_{i(a), j(a)}) (b, p_{i(b), j(b)})$ with $i(a) \neq i(b)$. We define two paths from $\tau$ to $\sigma$ are as follows. In both paths, the first derangement is $\tau$. The second is given by either $\tau (b, p_{i(a), j(a)})$ or $\tau (a, p_{i(b), j(b)})$. The symmetry between these two first steps being clear, we continue describing only the path beginning $(\tau, \tau (b, p_{i(a), j(a)}), \ldots)$. Note that the cycle structure of $\tau (b, p_{i(a), j(a)})$ is given by the cycle structure of $\tau$ with the two cycles $(p_{i(a), 1}, \ldots, p_{i(a), j(a) - 1}, a, p_{i(a), j(a)}, \ldots, p_{i(a), l(i(a))})$ and \\ $(p_{i(b), 1}, \ldots, p_{i(b), j(b) - 1}, b, p_{i(b), j(b)}, \ldots, p_{i(b), l(i(b))})$ merged into the single cycle 
\begin{equation*}
(p_{i(a),j(a)}, p_{i(a), j(a) + 1}, \ldots, p_{i(a), j(a) - 1}, a, b, p_{i(b), j(b)}, \ldots, p_{i(b), j(b) -1})
\end{equation*}
In particular, it is still a derangement. The next step on this path is $\tau (b, p_{i(a), j(a)}) (a, p_{i(b), j(b)})$. The cycle structure of this permutation is obtained from that of $\tau (b, p_{i(a), j(a)})$ by splitting the large cycle $(p_{i(a),j(a)}, p_{i(a), j(a) + 1}, \ldots, p_{i(a), j(a) - 1}, a, b, p_{i(b), j(b)}, \ldots, p_{i(b), j(b) -1})$ into the smaller cycles $(a,b)$ and $(p_{i(a),j(a)}, p_{i(a), j(a) + 1}, \ldots, p_{i(a), j(a) - 1}, p_{i(b), j(b)}, \ldots, p_{i(b), j(b) -1})$. Again, this is a derangement. The final step is multiplying by $(p_{i(b), j(b)}, p_{i(a), j(a)})$ to get to $\sigma$. We assign weight $\frac{1}{2}$ to both paths. \par 

\textbf{Case 3B:} $\tau = y (a, p_{i(a), j(a)}) (b, p_{i(b), j(b)})$ with $i(a) = i(b)$ and $\tau(a) \neq b$, $\tau(b) \neq a$. We create the following two paths from $\tau$ to $\sigma$. The first vertex is $\tau$. The second vertex is $\tau (a, b)$, which has the same cycle structure as $\tau$, with the long cycle $(p_{i(a), j(a) - 1}, a, p_{i(a), j(a)}, \ldots, p_{i(a), j(b)-1}, b, p_{i(a), j(b)}, \ldots)$ split into the cycles \\ $(a, p_{i(a), j(a)}, \ldots, p_{i(a), j(b)-1})$ and \\ $(b, p_{i(a), j(b)}, \ldots, p_{i(a), j(a)-1})$. By the assumption that $a$ and $b$ were not adjacent in the large cycle, both of the small cycles are of size at least $2$, so this is a derangement. The next vertex should be either $\tau (a,b) (p_{i(a), j(a)}, b)$ or $\tau (a,b) (p_{i(a), j(b)}, a)$. As in case 3A, there is obvious symmetry after relabelling $a$ and $b$, and we will continue the description of the first of these paths. Note that  $\tau (a,b) (p_{i(a), j(a)}, b)$ obtained from $\tau (a,b)$ by merging the cycles $(a, p_{i(a), j(a)}, \ldots, p_{i(a), j(b-1)})$ and $(b, p_{i(a), j(b)}, \ldots, p_{i(a), j(a)-1})$ into the single cycle $(a, b, p_{i(a), j(b)}, \ldots, p_{i(a), j(a)-1},  p_{i(a), j(a)}, \ldots, p_{i(a), j(b-1)})$. This is clearly a derangement. Finally, send $\tau (a,b) (p_{i(a), j(a)}, b)$ to $\sigma$ by multiplying by the transposition $(a, p_{i(a), j(b)})$. The path with the other middle edge is analogous; we assign weight $\frac{1}{2}$ to both paths. \par 

\textbf{Case 3C:} This covers the cases $\sigma = y (a, p_{i(a), j(a)})(b,a)$ and  $\tau = y (a, p_{i(a), j(a)}) (b, p_{i(b), j(b)})$ with $i(a) = i(b)$ and either $\tau(a) = b$ or $\tau(b) = a$. In this case, $\sigma$ is adjacent to $\tau$, and in particular $\sigma = \tau (p_{i(a), j(a)}, b)$. The flow should put all weight on this length-1 path. \\ \par 

The next step is to look at the cases where $x,y \in S_{n} \backslash D_{n}$. There will be 3 cases, depending on whether $\vert Fix(x) \vert = \vert Fix(y)\vert$, $\vert Fix(x) \vert = \vert Fix(y)\vert - 1$, or $\vert Fix(x) \vert = \vert Fix(y)\vert - 2$. These will turn out to be very similar to cases 1 through 3 above, with slightly more complicated notation. In particular, all paths will again be of length at most 4. \\ \par 

\textbf{Case 4:} $\vert Fix(x) \vert = \vert Fix(y)\vert$. In this case, we can assume without loss of generality that $y$ has the same cycle structure as $x$ with the $i$'th cycle, $(p_{i,1}, \ldots, p_{i,\ell(i)})$, split into the two cycles $(p_{i,1}, \ldots, p_{i, a})$ and $(p_{i,a+1}, \ldots p_{i, \ell(i)})$. Let $Fix(x) = Fix(y) \equiv F = \{ f_{1}, \ldots, f_{m} \}$. If $P_{x,y}[\sigma, \tau] > 0$, then $\sigma$ and $\tau$ have the same cycle structure, except that the single cycle including $p_{i,1}, \ldots, p_{i,\ell(i)}$ in that order in $\sigma$ is split into two cycles in $\tau$. One contains $p_{i,1}, \ldots, p_{i,a}$ in that order and the other contains $p_{i,a+1}, \ldots, p_{i, \ell(i)}$ in that order. Both will have some elements of $F$ interspersed between the elements of the form $p_{i,q}$, but these interspersed elements will also always be in the same order in $\sigma$ and $\tau$. \par 
In particular, for some $0 \leq \alpha \leq m$, some $z \in S_{m}$ and $\phi: [\alpha] \rightarrow \ell(i)$ we can write $S = \{ p_{i,1}, \ldots, p_{i, \ell(i)} \} \cup \{ f_{z(1)}, \ldots, f_{z(\alpha)} \}$ and
\begin{align*}
\sigma \vert_{S} &= x \prod_{k=1}^{\alpha} (p_{i, \phi(k)}, f_{z(k)}) \\
\tau \vert_{S} &= x (p_{i, 1}, p_{i, a+1}) \prod_{k=1}^{\alpha} (p_{i, \phi(k)}, f_{z(k)}) \\
\end{align*}
It is easy to check that $\sigma$ and $\tau$ are adjacent, and in fact $\sigma = \tau (\tau[p_{i,a}], \tau[p_{i, \ell(i)}] )$. Assign weight 1 to this length-1 path. \\
 
\textbf{Case 5:} $\vert Fix(x) \vert = \vert Fix(y)\vert - 1$. Assume without loss of generality that $Fix(x) = \{ f_{1}, \ldots, f_{m} \}$ and $Fix(y) = \{i, f_{1}, \ldots, f_{m} \}$. Therefore, $x = y (i,j)$ for some $j \neq i$, and $P_{x,y}[\sigma, \tau] > 0$ if and only if $\sigma, \tau$ can be written in the form
\begin{align} \label{EqSigmaTauRepCase5}
\sigma &= y (i,j) \prod_{s=1}^{m} (a_{s}, f_{s}) \\
\tau &= y (i,k) \prod_{s=1}^{m} (a_{s}, f_{s}) 
\end{align}
with $a_{s} \in [n] \backslash \cup_{t \geq s} \{ f_{t} \}$. We will construct paths very similar to those in Case 2. For $\alpha \in [n]$, let $F(\alpha) = (f_{1}(\alpha), \ldots, f_{\ell'(\alpha)}(\alpha))$ subset of  $F$ with the property that $f_{s} \in F$ if $a_{s} = \alpha$ or $a_{s} \in F$, using the representation \eqref{EqSigmaTauRepCase5}. We order $F(\alpha)$ by the indices, so that if $f_{i}, f_{j} \in F(\alpha)$ with $i < j$, then $f_{i}$ is before $f_{j}$ in the list $F(\alpha)$. When $F(\alpha)$ is empty, define $f_{1}(\alpha) = \alpha$. \par 
Next, say that a directed edge $(\eta, \kappa)$ in $S_{n}$ is \textit{defined by the transposition $(q,r)$} if $\eta^{-1} \kappa = (q,r)$. We will be changing paths by changing the transpositions that define their edges. In general, if $\gamma_{\sigma,\tau} = (\sigma, \sigma (q_{1}, r_{1}), \ldots, \sigma \prod_{i = 1}^{k} (q_{i}, r_{i}) = \tau)$ is a path from $\sigma$ to $\tau$, we say that  $\gamma_{\sigma',\tau'} = (\sigma', \sigma' (q_{1}', r_{1}'), \ldots, \sigma' \prod_{i = 1}^{k} (q_{i}', r_{i}') = \tau')$ is the path from $\sigma'$ to $\tau'$ obtained by replacing all edges defined by $(q_{i}, r_{i})$ to edges defined by $(q_{i}', r_{i}')$. To define the flows in case 5, we will take paths from case 2 and replace all edges defined by transpositions $(q,r)$ with edges defined by transposition $(f_{1}(q), f_{1}(r))$. \par 
We will say this more carefully for the analogue to case 2A. Assume $j,k$ aren't in the same cycle in $y$, and let $S$ be the union of all elements in cycles containing $i,j$ or $k$ in $\sigma$ or $\tau$. Then

\begin{align*}
\sigma \vert_{S} &= (F(i), i, F(j), j, b_{1}, \ldots, b_{\ell(j)}) (F(k), k, c_{1}, \ldots, c_{\ell(k)}) \\
\tau \vert_{S} &= (F(i), i, F(k), k, c_{1}, \ldots, c_{\ell(k)}) (F(j), j, b_{1}, \ldots, b_{\ell(j)}) \\
\end{align*}

And so we write the path $\{ (\sigma, \sigma (f_{1}(j), f_{1}(k))), (\sigma (f_{1}(j), f_{1}(k)), \sigma (f_{1}(j), f_{1}(k)) (f_{1}(j), f_{1}(i))) \}$. This replaces the analogous path $\{ (\sigma, \sigma (j, k)), (\sigma (j,k), \sigma (j,k) (i,j)) \}$ from case 2A. As in case 2B, we will use the same path if $j,k$ are in the same cycle in $y$ and $(i,j,k) \notin \sigma, \tau$. \par 

If $(i,j,k) \in \sigma, \tau$, the same discussion as in case 2C shows that the path which goes through $\sigma$, $\sigma(f_{1}(i),f_{1}(h)), \sigma(f_{1}(i),f_{1}(h))(f_{1}(j),f_{1}(h))$, $\sigma(f_{1}(i),f_{1}(h))(f_{1}(j),f_{1}(h))(f_{1}(k),f_{1}(h))$ and finally $\sigma(f_{1}(i),f_{1}(h))(f_{1}(j),f_{1}(h))(f_{1}(k),f_{1}(h))(f_{1}(i),f_{1}(h)) = \tau$ remains in $D_{n}$. \\ \par

\textbf{Case 6:} $\vert Fix(x) \vert = \vert Fix(y)\vert - 2$. Just as case 5 is very similar to case 2, case 6 is very similar to case 3. Assume $Fix(y) = \{ f_{1}, \ldots, f_{m} \}$ and $Fix(x) = \{c,d, f_{1}, \ldots, f_{m} \}$. In particular, $y = x (c,d)$. Write $y$ in cycle notation, as in case 0c. Then if $P_{x,y}[\sigma, \tau] > 0$, analogously to case 3, we can write the pair $(\sigma, \tau)$ in one of the following three ways: \\
\textbf{Case 6A:}
\begin{align*}
\tau = y \prod_{s=1}^{m} (a_{s}, f_{s}) \\
\sigma = y (c, p_{i(c), j(c)}) (d, p_{i(d), j(d)}) \prod_{s=1}^{m} (a_{s}, f_{s}) \\ 
\end{align*}
with $i(c) \neq i(d)$,\\
\textbf{Case 6B:}
\begin{align*}
\tau = y \prod_{s=1}^{m} (a_{s}, f_{s}) \\
\sigma = y (c, p_{i(c), j(c)}) (d, p_{i(d), j(d)}) \prod_{s=1}^{m} (a_{s}, f_{s}) \\ 
\end{align*}
with $i(c) = i(d)$, or \par
\textbf{Case 6C:}
\begin{align*}
\tau = y \prod_{s=1}^{m} (a_{s}, f_{s}) \\
\sigma = y (c, p_{i(c), j(c)}) (d,c) \prod_{s=1}^{m} (a_{s}, f_{s}) \\ 
\end{align*}
where in each case $a_{s} \in [n] \backslash \cup_{t \geq s} \{ f_{t} \}$. These three possibilities correspond exactly to those in cases 3A, 3B and 3C respectively. Just as in case 5, we define flows by taking the paths in cases 3A, 3B and 3C and substituting an edge defined by transposition $(f_{1}(q), f_{1}(r))$ for any edge defined by transposition $(q,r)$ in case 3. \\ \par 

Having defined the measures, couplings, and flows, we will now bound the comparison constant $\mathcal{A}$. We will do this by bounding separately the edges that appear in paths between $\sigma$ and $\tau$ with $P_{x,y}[\sigma, \tau] > 0$ and $x,y$ satisfying the conditions from cases 1 through 6 above. To be more precise, for $\mathcal{F} \in \{ 1, 2, \ldots, 6 \}$ and $x,y \in S_{n}$, say that $(x,y) \in \mathcal{F}$ if $x^{-1}y$ is a transposition and $x,y$ satisfies the conditions of case $\mathcal{F}$ above. We then write:
\begin{align*}
\mathcal{A} &= \sup_{(q,r) \in E}  ( 1 + 2\sum_{\gamma_{x,z} \ni (q,r)} k[\gamma_{x,z}] G_{x,z}[\gamma_{x,z}] \sum_{y \notin G} P_{y}[z]  \\
&+ \sum_{\gamma_{a,b} \ni (q,r)} G_{a,b}[\gamma_{a,b}] k[\gamma_{a,b}] \sum_{(x,y) \in \widehat{E}, x,y \notin G} P_{x,y}[a,b] ) \\
&\leq 1 + \sup_{(q,r)} \sum_{\mathcal{F}} \sum_{\gamma_{x,z} \ni (q,r), (x,z) \in \mathcal{F}} k[\gamma_{x,z}] G_{x,z}[\gamma_{x,z}] \sum_{y \notin G} P_{y}[z]  \\
&+ \sup_{(q,r)} \sum_{\mathcal{F}} \sum_{\gamma_{a,b} \ni (q,r)} G_{a,b}[\gamma_{a,b}] k[\gamma_{a,b}] \sum_{(x,y) \in \widehat{E}, x,y \notin G, (x,y) \in \mathcal{F}} P_{x,y}[a,b] ) \\
\end{align*}
We will then separately bound the weights associated with each of the 6 cases in this sum. In principle, this part of the argument is the same as the weight-counting at the end of the proof of Theorem 5; it is only the larger number of terms in each case that makes it more complicated. \\ \par 

\textbf{Case 1:} $(q,r) \in \gamma_{\sigma, \tau}$, where $G_{\sigma,\tau}[\gamma_{\sigma,\tau}] >0$ and $P_{x,y}[\sigma, \tau] > 0$ for some $x,y \in D_{n}$. This implies that in fact $(q,r) = (x,y)$, and so the total weight in this case is exactly 1.\\ 

\textbf{Case 2:} $(q,r) \in \gamma_{\sigma, \tau}$, where $G_{\sigma,\tau}[\gamma_{\sigma,\tau}] >0$ and $P_{x,y}[\sigma, \tau] > 0$ for some $x \in D_{n}$, $\vert Fix(y) \vert = 1$. Assume $Fix(y) = i$. Then $x = y (i,j)$ for some $j$, and we can write $\sigma = x = y(i,j)$, $\tau = y(i,k)$. If $x,y$ correspond to case 2A or 2B above, the path between $\sigma$ and $\tau$ passes through the permutations $\{ y(i,j), y (i,j)(j,k), y(i,j)(j,k)(i,j) \}$. Thus, either $(q,r) = (y(i,j), y(i,j)(j,k))$ or $(q,r) = ( y(i,j)(j,k), y(i,j)(j,k)(i,j))$. \par 
First, assume $(q,r) = (y(i,j), y(i,j)(j,k))$. Then $q (i,j)$ and $r (j,k) (i,j)$ have a fixed point at $i$. This means that $q[j] = i$ and $r[k] = i$. In particular, once $q,r$ and $i$ have been fixed, so are $j$ and $k$. Thus, for fixed $q,r$ there are at most $n-1$ choices for $i$, and these choices determine $x,y,\sigma$ and $\tau$. Since $P_{x,y}[\sigma, \tau] \in \{0, \frac{1}{n-1} \}$, this means that the total weight is at most $(n-1) \frac{1}{n-1} = 1$. \par 
If $(q,r) = ( y(i,j)(j,k), y(i,j)(j,k)(i,j))$, a similar computation gives the same conclusion. Thus, the total weight for any given edge $(q,r)$ coming from pairs $x,y$ in case 2A or 2B is at most 2. \par 
If $x,y$ correspond to case 2C, there are four possibilities for the pair $(q,r)$, as described in case 2C. Following the notation in that case, we look at the first possibility, $(q,r) = (y(i,j), y(i,j)(i,h))$. Note that $q[i] = j$ and $r[(q(i,j))^{-1}[j]] = h$. In particular, once $q,r,k$ and $i$ have been fixed, $j = q[i]$ can be computed from them, and this information can be used to compute $h = r[(q(i,j))^{-1}[j]]$. Thus, for fixed $q,r$ there are at most $(n-1)(n-2)$ choices of distinct $i,k$, and these choices determine $x,y,\sigma$ and $\tau$. Since $P_{x,y}[\sigma,\tau] \in \{ 0, \frac{1}{n-1} \}$ and the weight of any particular path in case 2C is $\frac{1}{n-3}$, the total weight assigned to any first edge $(q,r)$ by such a path is at most $\frac{n-2}{n-3}$. A similar analysis with the same conclusion applies to the other 3 edges of the length-4 paths described in case 2C. \par 
We conclude that the total weight assigned to any edge $(q,r)$ by vertices $(x,y)$ covered by case 2 is at most $6 \frac{n-2}{n-3}$. \\ \par 

\textbf{Case 3:} $(q,r) \in \gamma_{\sigma, \tau}$, where $G_{\sigma,\tau}[\gamma_{\sigma,\tau}] >0$ and $P_{x,y}[\sigma, \tau] > 0$ for some $x \in D_{n}$, $\vert Fix(y) \vert = 2$. Write in this case $Fix(y) = \{a,b \}$, so $x = y (a,b)$. There are three possibilities for pairs $(\sigma, \tau)$ with $P_{x,y}[\sigma, \tau] > 0$, corresponding to cases 3A, 3B and 3C. We keep the same cycle notation as in case 3 above and begin by looking at case 3A. In this case, the path from $\tau$ to $\sigma$ has the form 
\begin{align*}
\tau &\rightarrow \tau (b, p_{i(a), j(a)}) \\
&\rightarrow \tau (b, p_{i(a), j(a)}) (a, p_{i(b), j(b)}) \\
&\rightarrow \tau (b, p_{i(a), j(a)}) (a, p_{i(b), j(b)}) (p_{i(b),j(b)}, p_{i(a),j(a)}) = \sigma 
\end{align*}
Note that the pair $(q,r)$ can be any of the 3 edges defined by this path. Look for now at edges of the form $(q,r) = (\tau, \tau (b, p_{i(a), j(a)}))$. In this case, since $\tau =   y (a, p_{i(a), j(a)})(b, p_{i(b), j(b)})$, where $y$ has fixed points at $a$ and $b$, we note that $q(b, p_{i(b), j(b)})(a, p_{i(a), j(a)})$ and $r(b, p_{i(a), j(a)}))(b, p_{i(b), j(b)})(a, p_{i(a), j(a)})$ also have fixed points at $a$ and $b$. In particular, $q[a] = p_{i(a), j(a)}$, $q[b] = p_{i(b), j(b)}$, $r[a] = b$ and $r[b] = p_{i(b), j(b)}$. Thus, once $q,r,a,$ and $b$ have been fixed, they determine $(p_{i(a), j(a)}, p_{i(b),j(b)}, \sigma$, and $\tau$. For fixed $q,r$, there are at most $\frac{n(n+1)}{2}$ choices of $(a,b)$. On the other hand, for a given $x,y$, the probability $P_{x,y}[\sigma,\tau]$ for any pair $\sigma, \tau$ associated with $a,b$ is at most $\frac{1}{(n-1)(n-2)}$. Thus, the total weight assigned to this path is at most $\frac{n(n+1)}{2(n-1)(n-2)}$. Comparing the pair $(q,r)$ to the remaining two edges of the path, the same phenomenon holds: the fact that $y$ has two fixed points means that the choice of two parameters determines the entire path, and so again the weight given to these edges is at most $\frac{n(n+1)}{2(n-1)(n-2)}$. Combining these bounds, we see that this case gives a total weight of at most $3 \frac{n(n+1)}{2(n-1)(n-2)}$. \par 
Looking at the second case, $\tau = y (a, p_{i(a), j(a)})(b, p_{i(b), j(b)})$ with $i(a) = i(b)$, gives the same congestion bound of $3 \frac{n(n+1)}{2(n-1)(n-2)}$ with essentially the same proof. The third case,  $\tau = y (a, p_{i(a), j(a)}) (a,b)$, is essentially the same as case 1. As in that case, we have $\sigma$ and $\tau$ adjacent and again determined by the choice of $(a,b)$. Thus, the total congestion in this case is at most 1 for $n > 6$. \par 
Putting these bounds together, the total weight for any given edge $(q,r)$ coming from pairs $(x,y)$ in this case is at most $1 + 6 \frac{n(n+1)}{2(n-1)(n-2)}$. \\ \par 

\textbf{Case 4:} $(q,r) \in \gamma_{\sigma, \tau}$, where $G_{\sigma,\tau}[\gamma_{\sigma,\tau}] >0$ and $P_{x,y}[\sigma, \tau] > 0$ for some $x,y \notin D_{n}$, $\vert Fix(x) \vert = \vert Fix(y) \vert$. As noted in the coupling description for case 4, this means $(q,r) = (\sigma, \tau)$. We now determine the total weight given to the pair $(\sigma, \tau)$ by all pairs $x,y$ with $ \vert Fix(x) \vert = j$. \par 
First, note that for any particular pair $x,y$ with $\vert Fix(x) \vert = j$, we have $P_{x,y}[\sigma, \tau] \leq \frac{(j-1)!}{(n-1)!}$. Next, note that there are at most $\frac{n!}{j! (n-j)!}$ such pairs $x,y$ for which $P_{x,y}[\sigma, \tau] > 0$; we obtain the bound by noting this is the number of ways to choose the $j$ elements of $Fix(x)$. Summing over the size $j$ of $Fix(x) = Fix(y)$, the total weight assigned to $\sigma, \tau$ is at most

\begin{align*}
W &\equiv \sum_{j=1}^{n-2} \frac{n!}{j! (n-j)!} \frac{(j-1)!}{(n-1)!} \\
&= \sum_{j=1}^{n-2} \frac{n}{j} \frac{1}{(n-j)!} \\
&\leq \sum_{j=1}^{\frac{n}{2}} n \frac{1}{\left( \frac{n}{2} \right)!} + \sum_{j=\frac{n}{2}}^{n-2} \frac{2}{(n-j)!} \\
&\leq \frac{n^{2}}{\left( \frac{n}{2} \right)!} + 2 \sum_{j=2}^{\infty} \frac{1}{j!} \\
&\leq 1 + 2(e-1)
\end{align*}

where the last inequality only applies for $n$ sufficiently large that $n^{2} \leq \frac{n}{2}!$, e.g. $n > 10$ suffices. Thus, the total weight for any given edge $(q,r)$ coming from pairs $(x,y)$ in this case is at most $2e + 1$. \\

\textbf{Case 5:} $(q,r) \in \gamma_{\sigma, \tau}$, where $G_{\sigma,\tau}[\gamma_{\sigma,\tau}] >0$ and $P_{x,y}[\sigma, \tau] > 0$ for some $x, y \notin D_{n}$, $\vert Fix(y) \vert = \vert Fix(x) \vert - 1$. The counting of paths for fixed $\vert Fix(y) \vert$ is as in case 2, and finding the weights of each path and summing is as case 4. More precisely, as in case 4, if $\vert Fix(y) \vert = j$, the weight assigned to $(\sigma, \tau)$ by $P_{x,y}$ is at most $\frac{(j-1)!}{(n-1)!}$. By the same argument as in case 2, the total weight for any edge $(q,r) \in \gamma_{\sigma, \tau}$ is at most $6 \frac{n-2}{n-3}$, and there are at most $\frac{n!}{j! (n-j)!}$ pairs $(x,y)$ for which $P_{x,y}[\sigma, \tau] > 0$. Thus, a calculation analogous to the bound on $W$ in case 4 gives a total weight of at most $6 (2e + 1) \frac{n-2}{n-3}$. Combining these arguments gives a total weight of at most $6 (2e + 1) \frac{n-2}{n-3}$. \\ \par 

\textbf{Case 6:} $(q,r) \in \gamma_{\sigma, \tau}$, where $G_{\sigma,\tau}[\gamma_{\sigma,\tau}] >0$ and $P_{x,y}[\sigma, \tau] > 0$ for some $x,y \notin D_{n}$, $\vert Fix(y) \vert = \vert Fix(x) \vert + 2$.  Combining the arguments of 4 and 3 in the same way that case 5 combined the arguments of cases 4 and 2 gives a total weight of at most $(2e + 1)(1 + 3 \frac{n(n+1)}{(n-1)(n-2)})$.\\
 
Putting together the 6 bounds, and noting that all paths are of length at most $4$, we have $\mathcal{A} \leq 2(e+1) \left(2 + 6 \frac{n-2}{n-3} + 3 \frac{n(n+1)}{(n-1)(n-2)} \right)$. This proves the upper bound in Theorem \ref{ThmDerangementMixing}. \par 

To prove the lower bound, define for $0 \leq \epsilon \leq 1$ the distribution $\pi_{\epsilon}$ on $S_{n}$ by $\pi_{\epsilon}(\sigma) = Z$ for $\sigma \in D_{n}$ and $\pi_{\epsilon}(\sigma) = Z \epsilon$ for $\sigma \in S_{n} \backslash D_{n}$, where $\frac{1}{n!} \leq Z \leq \frac{2e}{n!}$ is a normalizing constant. Then define the kernel $Q_{\epsilon}$ to be the Metropolis kernel associated with base chain $Q$ and stationary distribution $\pi_{\epsilon}$. Let $1 = \beta_{\epsilon, 0} \geq \beta_{\epsilon, 1} \geq \ldots \geq \beta_{\epsilon, n!} \geq 0$ be the spectrum of $Q_{\epsilon}$. Note that $Q_{1} = Q$, and $Q_{0} = K$ when restricted to $D_{n}$. By Cauchy's interlacing theorem, if we denote by $\beta_{1}$ the second-largest eigenvalue of $K$, we have $1 - \beta_{0,1} \leq 1 - \beta_{1}$. \par 
Next, we compare $Q_{\epsilon}$ and $Q_{\epsilon'}$ for $\epsilon > \epsilon' >0$. Since $Q_{\epsilon}[\sigma, \tau] > 0$ if and only if $Q_{\epsilon'}[\sigma, \tau] > 0$, all paths can be made length 1. For this choice of paths, there are three types of edges to look at: those between two elements of $D_{n}$, those between an element of $D_{n}$ and an element of $S_{n} \backslash D_{n}$, and finally those between two elements of $S_{n} \backslash D_{n}$. In all three cases, the congestion and path length is 1. In the first case, $\frac{1}{\pi_{\epsilon}(z) Q_{\epsilon}(z,w)} \pi_{\epsilon'}(z) Q_{\epsilon}(z,w) \leq 2e$. In the second case, $\frac{1}{\pi_{\epsilon}(z) Q_{\epsilon}(z,w)} \pi_{\epsilon'}(z) Q_{\epsilon}(z,w) \leq 2e \frac{\epsilon'}{\epsilon} \leq 2e$. In the third case, $\frac{1}{\pi_{\epsilon}(z) Q_{\epsilon}(z,w)} \pi_{\epsilon'}(z) Q_{\epsilon}(z,w) \leq 2e \left( \frac{\epsilon'}{\epsilon} \right)^{2} \leq 2e$. Thus, $\epsilon_{Q_{\epsilon}}(f,f) \leq 2e \epsilon_{Q_{1}}(f,f)$ for all $\epsilon > 0$. Taking the limit as $\epsilon$ goes to 0,  $\epsilon_{Q_{0}}(f,f) \leq 2e \epsilon_{Q_{1}}(f,f)$. Again, by Cauchy's interlacing theorem, $\epsilon_{K}(f,f) \leq 2e \epsilon_{Q_{1}}(\widehat{f}, \widehat{f})$. This completes the proof of Theorem \ref{ThmDerangementMixing}. $\square$ \\ \par 

To prove Corollary \ref{CorDerangementMixing}, we note that

\begin{align*}
1 - \beta_{1}(Q) &= \min_{f \neq 0} \frac{\mathcal{E}_{Q}(f,f)}{V_{\nu}(f)} \\
&\geq \min_{f \neq 0} 132(1 + e) \frac{\mathcal{E}_{K}(\widehat{f}, \widehat{f})}{V_{\mu}(\widehat{f})} \\
&\geq 264(1 + e) \frac{1}{n}
\end{align*}

Where the first inequality is due to Lemma \ref{LemmaVarLogSobComp}, Theorem \ref{ThmDerangementMixing} and the fact that $\frac{\vert S_{n} \vert}{\vert D_{n} \vert} \leq 3$ for $n \geq 10$, and the second inequality is due to the spectral gap estimate in section 9.2 of \cite{Salo04}. The bound on $\alpha(Q)$ is found the same way, and relies on the bound $\alpha(K) = \Omega(\frac{1}{n \log(n)})$ found in \cite{LeYa98}.

\section{Reversibility and Laziness Assumptions} \label{SecLosingAssumptions}

Theorem \ref{ThmDirGenChain} doesn't make use of either the reversibility of $\frac{1}{2}$-laziness assumptions. However, in order to obtain Total Variation mixing time bounds, we used Theorem \ref{ThmLogSobolevMixingBound}, which uses both assumptions. In this section, we briefly discuss the common techniques for avoiding these assumptions. To use comparison without reversibility, Cheeger's inequality is often used. In particular, section 4 of \cite{DGJM06} applies without modification to the setting of this paper.\par 
Avoiding the laziness assumption requires slightly more work, but is more effective. For non-lazy chains, the term $\beta_{1}(P)$ in Theorem \ref{ThmLogSobolevMixingBound} may be replaced by $\max(\beta_{1}(P), \vert \beta_{\vert X \vert}(P) \vert)$. To estimate $ \vert \beta_{\vert X \vert}(P) \vert$, define the following analogue to the Dirichlet form:

\begin{align*}
\mathcal{F}_{P}(f,f) &= \langle (I+P)f, f \rangle \\
&= \frac{1}{2} \sum_{x,y \in X} \vert f(x) + f(y) \vert^{2} P(x,y) \pi(x)
\end{align*}

By inequality 2.3 of \cite{DiSa93b}, if $\mathcal{F}_{K}(\widehat{f}, \widehat{f}) \leq \mathcal{A} \mathcal{F}_{Q}(f,f)$ and $C = \sup_{y \in \Omega} \frac{\nu(y)}{\mu(y)}$, we have
\begin{align*}
\beta_{1}(Q) &\geq \beta_{\vert \Omega \vert}(Q) \\
& \geq -1 + \frac{C}{\mathcal{A}}(1 + \beta_{\vert \Omega \vert}(K)) \\
& \geq -1 + \frac{C}{\mathcal{A}}(1 + \beta_{\vert \widehat{\Omega} \vert}(K)) 
\end{align*}

As per the comments immediately following Lemma 2, it is possible to obtain bounds on more eigenvalues if there is a more structured relationship between $f, \widehat{f}$. \par 

Define paths, flows, extensions, and couplings as in the proof of Theorem \ref{ThmDirGenChain}, with the added requirement that flows must be concentrated on paths $\gamma$ with $\vert \gamma \vert$ odd. For a given edge $e$ in path $\gamma$, let $t_{e}(\gamma)$ be the number of times that $e$ is traversed in $\gamma$. In Theorem \ref{ThmDirGenChain}, we could assume without loss of generality that this was at most 1; in the present context, we can assume that it is at most 2. Then we have the following comparison result:

\begin{thm}[Comparison of Forms for General Chains] \label{ThmNonLazyGenChain}
For flows, distributions and couplings as described above,
\begin{equation*}
\mathcal{F}_{K}(\widehat{f}, \widehat{f}) \leq \mathcal{A} \mathcal{F}_{Q}(f,f)
\end{equation*}
where 
\begin{align*}
\mathcal{A} &= \sup_{Q(q,r) >0}  \frac{1}{Q(q,r) \nu(q)} ( \sum_{\gamma_{x,y} \ni (q,r)} G_{x,y}(\gamma) t_{(q,r)}(\gamma) k[\gamma] K(x,y) \mu(x) \\
&+ 2  \sum_{\gamma_{x,z} \ni (q,r)} t_{(q,r)}(\gamma) k[\gamma_{x,z}] G_{x,z}[\gamma_{x,z}] \sum_{y \notin G} P_{y}[z] K(x,y) \mu(x) \\
&+ \sum_{\gamma_{a,b} \ni (q,r)} G_{a,b}[\gamma_{a,b}] t_{(q,r)}(\gamma) k[\gamma_{a,b}] \sum_{(x,y) \in \widehat{E}, x,y \notin G} P_{x,y}[a,b] K(x,y) \mu(x) )
\end{align*}

\end{thm}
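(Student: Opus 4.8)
The plan is to mirror the proof of Theorem~\ref{ThmDirGenChain} essentially line for line, replacing the telescoping identity for differences along a path by the corresponding signed identity for sums. The point is that if $\gamma = (v_0, v_1, \dots, v_k)$ is a path with $k = k[\gamma]$ odd, then all interior terms cancel in pairs and
\begin{equation*}
f(v_0) + f(v_k) = \sum_{i=0}^{k-1} (-1)^i \bigl( f(v_i) + f(v_{i+1}) \bigr),
\end{equation*}
which is exactly what is needed to express $f(a)+f(b)$ in terms of the quantities $(f(q)+f(r))^2$ that appear in $\mathcal{F}_Q$. Since flows are assumed supported on odd-length paths, this substitution is available for every pair that occurs. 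Note that the whole argument is combinatorial bookkeeping on the double sums $\tfrac12\sum_{x,y}|f(x)+f(y)|^2 K(x,y)\mu(x)$ and $\tfrac12\sum_{q,r}|f(q)+f(r)|^2 Q(q,r)\nu(q)$, so no linear algebra and no reversibility is needed in the proof itself.

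First we split $\mathcal{F}_K(\widehat f,\widehat f)$ into the three pieces $\tfrac12 R_1 + R_2 + \tfrac12 R_3$ according to whether both, one, or neither of $x,y$ lies in $\Omega$, exactly as in the proof of Theorem~\ref{ThmDirGenChain}. For $R_1$ we replace $f$ by its flow representation through odd paths, apply Cauchy--Schwarz twice — once to pull $F[\gamma]$ out of the square, once to pull the sum over the $k[\gamma]$ edges out — and observe that after squaring the signs $(-1)^i$ disappear; each traversal of an edge $(q,r)$ in $\gamma$ then contributes one copy of $(f(q)+f(r))^2$, which is why the coefficient picks up the factor $t_{(q,r)}(\gamma)$. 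For $R_2$ we use $\sum_z P_y[z] = 1$ to write $f(x)+\widehat f(y) = \sum_z P_y[z](f(x)+f(z))$, apply Cauchy--Schwarz over $z$, and treat each $(f(x)+f(z))^2$ by the same odd-path bound; for $R_3$ we use the coupling to write $\widehat f(x)+\widehat f(y) = \sum_{a,b} P_{x,y}[a,b](f(a)+f(b))$ and proceed identically. Collecting the coefficient of a fixed $(f(q)+f(r))^2$ across $R_1 + 2R_2 + R_3$ produces the three sums in the definition of $\mathcal{A}$, now carrying the weights $t_{(q,r)}(\gamma)$, while the coefficient of $(f(q)+f(r))^2$ in $\mathcal{F}_Q(f,f)$ is at least $Q(q,r)\nu(q)$; dividing gives $\mathcal{F}_K(\widehat f,\widehat f)\le\mathcal{A}\,\mathcal{F}_Q(f,f)$.

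Two points need care. The first is the normalization ``$t_e(\gamma)\le 2$ without loss of generality'': starting from an arbitrary odd path one splices out any detour that traverses an edge and immediately returns along it, reducing the length by $2$ and preserving odd parity, until no edge is used more than twice; this is the analogue of the ``no repeated edges'' reduction in Theorem~\ref{ThmDirGenChain} and is the reason the $R_1$ bound cannot simply be reduced to congestion $1$. The second, and the step I expect to be the genuine obstacle, is handling the diagonal ($x=y$) and self-loop terms: in $\mathcal{F}_K$ the contribution of $x=y\in\Omega$ is $2|f(x)|^2 K(x,x)\mu(x)$, and absorbing it into $\mathcal{F}_Q$ requires an odd path from $x$ to itself, namely the length-$1$ self-loop, which needs $Q(x,x)>0$; so one either restricts attention to the situation where the chains retain enough holding probability for this to hold, or matches these diagonal terms directly against the corresponding diagonal of $\mathcal{F}_Q$. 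Apart from this, every inequality is a formal transcription of the Dirichlet-form argument with ``$-$'' replaced by ``$+$'' and ``arbitrary path'' replaced by ``odd path,'' so no new estimates are required.
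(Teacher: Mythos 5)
Your proposal is correct and follows the paper's proof essentially line for line: the same split $\mathcal{F}_K = \tfrac12 R_1 + R_2 + \tfrac12 R_3$, the same alternating-sign telescope $f(v_0)+f(v_k)=\sum_{i=0}^{k-1}(-1)^i(f(v_i)+f(v_{i+1}))$ valid for odd $k$, Cauchy--Schwarz twice, and collection of the coefficient of $(f(q)+f(r))^2$. Your second ``point needing care'' (diagonal terms $x=y$) is a sound observation about the framework, but it is already subsumed by the theorem's hypothesis that flows exist and are supported on odd-length paths: a length-$1$ self-loop path from $x$ to itself requires $Q(x,x)>0$, and if no odd walk from $x$ to $x$ exists in $G$ then the required flow does not exist and the statement is vacuous, so this does not constitute an extra gap to close.
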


\begin{proof}

Start by writing
\begin{align*}
\mathcal{F}_{K}(\widehat{f}, \widehat{f}) &= \frac{1}{2} \sum_{x,y \in \widehat{\Omega}} \vert \widehat{f}(x) + \widehat{f}(y) \vert^{2} K(x,y) \mu(x) \\
&=\frac{1}{2} \sum_{x,y \in \Omega} \vert f(x) + f(y) \vert^{2} K(x,y) \mu(x) +  \sum_{ x \in \Omega, y \notin \Omega} \vert f(x) + \widehat{f}(y) \vert^{2} K(x,y) \mu(x) \\
&+ \frac{1}{2} \sum_{ x,y \notin \Omega} \vert \widehat{f}(x) + \widehat{f}(y) \vert^{2} K(x,y) \mu(x) \\
&\equiv \frac{1}{2} R_{1} + R_{2} + \frac{1}{2} R_{3}
\end{align*}

The goal is to compare this to $\mathcal{F}_{Q}(f,f) = \frac{1}{2} \sum_{x,y \in \Omega} \vert f(x) + f(y) ) \vert^{2} Q(x,y) \nu(x)$. We begin by looking at $R_{1}$ (note that the assumption of odd path length occurs on the second line):

\begin{align*}
R_{1} &= \sum_{x,y \in \Omega} \vert f(x) + f(y) \vert^{2} K(x,y) \mu(x) \\
&= \sum_{x,y \in \Omega} \left\vert \sum_{\gamma \in \Gamma_{x,y}} G_{x,y}(\gamma) \sum_{i=0}^{k[\gamma] - 1} (-1)^{i}(f(v_{x,y,i+1}) + f(v_{x,y,i})) \right\vert^{2} K(x,y) \mu(x) \\
&\leq \sum_{x,y \in \Omega}  \sum_{\gamma \in \Gamma_{x,y}} G_{x,y}(\gamma) \left\vert \sum_{i=0}^{k[\gamma] - 1} (-1)^{i}(f(v_{x,y,i+1}) + f(v_{x,y,i})) \right\vert^{2}  K(x,y) \mu(x)  \\
&\leq  \sum_{x,y \in \Omega}  \sum_{\gamma \in \Gamma_{x,y}} G_{x,y}(\gamma) k[\gamma] \sum_{i=0}^{k[\gamma] - 1} (f(v_{x,y,i+1}) + f(v_{x,y,i}))^{2}  K(x,y) \mu(x) 
\end{align*}

And so the coefficient of $(f(q) + f(r))^{2}$ in $R_{1}$ is at most
\begin{equation} \label{EqR1B}
[(f(q) + f(r))^{2}] R_{1} \leq \sum_{\gamma_{x,y} \ni (q,r)} G_{x,y}(\gamma) t_{(q,r)}(\gamma) k[\gamma] K(x,y) \mu(x)
\end{equation}

The next step is to bound $R_{2}$, which depends on the measures $P_{x}$ and flows $G_{x,y}$, though not on the couplings $P_{x,y}$. Write:

\begin{align*}
R_{2} &= \sum_{x \in \Omega, y \notin \Omega} \vert f(x) + \widehat{f}(y) \vert^{2} K(x,y) \mu(x) \\
&= \sum_{x \in \Omega, y \notin \Omega} \left\vert \sum_{z \in \Omega} P_{y}[z] (f(x) + f(z)) \right\vert^{2} K(x,y) \mu(x) \\
&\leq \sum_{x \in \Omega, y \notin \Omega} \sum_{z \in \Omega} P_{y}[z] (f(x) + f(z))^{2} K(x,y) \mu(x)\\
&\leq \sum_{x \in \Omega, y \notin \Omega} \sum_{z \in \Omega} P_{y}[z] \sum_{\gamma \in \Gamma_{x,z}} G_{x,z}(\gamma) k[\gamma] \sum_{i=0}^{k[\gamma] - 1} (f(v_{x,z,i+1}) + f(v_{x,z,i}))^{2}  K(x,y) \mu(x) 
\end{align*}

where the last inequality is Cauchy-Schwarz. The next step is to write $(f(x) + f(z))^{2}$ in terms of differences which appear in $S$. To do so, note that
\begin{equation} \label{IneqCongestionBound}
\begin{aligned}
(f(x) + f(z))^{2} &= \left( \sum_{\gamma \in \Gamma_{x,z}} G_{x,z}(\gamma) \sum_{i=0}^{k(\gamma) - 1} (-1)^{i}( f(v_{x,z,i+1}) + f(v_{x,z,i}) ) \right)^{2} \\
&\leq \sum_{\gamma \in \Gamma_{x,z}} G_{x,z}(\gamma) \left( \sum_{i=0}^{k(\gamma) - 1} (-1)^{i}( f(v_{x,z,i+1}) + f(v_{x,z,i}) ) \right)^{2} \\
&\leq \sum_{\gamma \in \Gamma_{x,z}} G_{x,z}(\gamma) k[\gamma] \sum_{i=0}^{k(\gamma) - 1} ( f(v_{x,z,i+1}) + f(v_{x,z,i}) )^{2}
\end{aligned}
\end{equation}
where both inequalities are just Cauchy-Schwarz. From this bound, the coefficient of $(f(q) + f(r))^{2}$ in $R_{2}$ is at most

\begin{equation} \label{EqR2B}
[(f(q) + f(r))^{2}]R_{2} \leq \sum_{\gamma_{x,z} \ni (q,r)} t_{(q,r)}(\gamma) k[\gamma_{x,z}] G_{x,z}[\gamma_{x,z}] \sum_{y \notin G} P_{y}[z] K(x,y) \mu(x)
\end{equation}  

Finally, it is necessary to bound $R_{3}$. Write
\begin{align*}
R_{3} &= \sum_{ x,y in \widehat{\Omega} \backslash \Omega} \vert \widehat{f}(x) + \widehat{f}(y) \vert^{2} K(x,y) \mu(x)\\
&= \sum_{ x,y in \widehat{\Omega} \backslash \Omega} \left\vert \sum_{a \in \Omega} P_{x}[a] f(a) + \sum_{b \in \Omega} P_{y}[b] f(b) \right\vert^{2} K(x,y) \mu(x)\\
&= \sum_{ x,y in \widehat{\Omega} \backslash \Omega} \left\vert \sum_{a,b \in \Omega} P_{x,y}[a,b] (f(a) + f(b)) \right\vert^{2} K(x,y) \mu(x) \\
&\leq \sum_{ x,y in \widehat{\Omega} \backslash \Omega} \sum_{a,b \in G} P_{x,y}[a,b] (f(a) + f(b))^{2} K(x,y) \mu(x)
\end{align*}
Using inequality \eqref{IneqCongestionBound}, this gives
\begin{equation*} 
R_{3} \leq \sum_{ x,y in \widehat{\Omega} \backslash \Omega} \sum_{a,b \in G} P_{x,y}[a,b] \sum_{\gamma \in \Gamma_{a,b}} G_{a,b}(\gamma) k[\gamma] \sum_{i=0}^{k(\gamma) - 1} ( f(v_{a,b,i+1}) + f(v_{a,b,i}) )^{2} K(x,y) \mu(x)
\end{equation*}

In particular, the coefficient of $(f(q) + f(r))^{2}$ in this upper bound is
\begin{equation} \label{EqR3B}
[(f(q) + f(r))^{2}]R_{3} \leq \sum_{\gamma_{a,b} \ni (q,r)} G_{a,b}[\gamma_{a,b}] t_{(a,b)}(\gamma_{a,b}) k[\gamma_{a,b}] \sum_{(x,y) \in \widehat{E}, x,y \notin G} P_{x,y}[a,b] K(x,y) \mu(x)
\end{equation}  

Combining inequalities \eqref{EqR1B}, \eqref{EqR2B} and \eqref{EqR3B}, the coefficient of $(f(q) + f(r))^{2}$ in $R_{1} + 2 R_{2} + R_{3}$ is bounded by
\begin{align*}
[(f(q) + f(r))^{2}](R_{1} + 2 R_{2} + R_{3}) &\leq \sum_{\gamma_{x,y} \ni (q,r)} G_{x,y}(\gamma) t_{(q,r)}(\gamma) k[\gamma] K(x,y) \mu(x) \\
&+ 2  \sum_{\gamma_{x,z} \ni (q,r)} t_{(q,r)}(\gamma)  k[\gamma_{x,z}] G_{x,z}[\gamma_{x,z}] \sum_{y \notin G} P_{y}[z] K(x,y) \mu(x) \\
&+ \sum_{\gamma_{a,b} \ni (q,r)} G_{a,b}[\gamma_{a,b}] t_{(q,r)}(\gamma)  k[\gamma_{a,b}] \sum_{(x,y) \in \widehat{E}, x,y \notin G} P_{x,y}[a,b] K(x,y) \mu(x)
\end{align*}

On the other hand, the coefficient of $(f(q) + f(r))^{2}$ in $\mathcal{F}_{Q}(f,f)$ is at least $Q(q,r) \nu(q)$. Thus, setting

\begin{align*}
\mathcal{A} &= \sup_{Q(q,r) >0}  \frac{1}{Q(q,r) \nu(q)} ( \sum_{\gamma_{x,y} \ni (q,r)} G_{x,y}(\gamma) t_{(q,r)}(\gamma) k[\gamma] K(x,y) \mu(x) \\
&+ 2  \sum_{\gamma_{x,z} \ni (q,r)} t_{(q,r)}(\gamma) k[\gamma_{x,z}] G_{x,z}[\gamma_{x,z}] \sum_{y \notin G} P_{y}[z] K(x,y) \mu(x) \\
&+ \sum_{\gamma_{a,b} \ni (q,r)} G_{a,b}[\gamma_{a,b}] t_{(q,r)}(\gamma) k[\gamma_{a,b}] \sum_{(x,y) \in \widehat{E}, x,y \notin G} P_{x,y}[a,b] K(x,y) \mu(x) )
\end{align*}

this implies

\begin{equation*}
\mathcal{F}_{K}(\widehat{f}, \widehat{f}) \leq \mathcal{A} \mathcal{F}_{Q}(f,f)
\end{equation*}
which completes the proof.

\end{proof}

\section{Comparison for Chains on Continuous State Spaces} \label{SecContChain}

In this section, we write down an analogue to Theorem \ref{ThmDirGenChain} for Markov chains on continuous state spaces, based on Theorem 3.2 of \cite{Yuen01}. It is necessary to develop some notation and definitions. \par 

We consider two state spaces $S \subset \widehat{S}$, with measurable sets $\widehat{F}$ and $F = \{ A \cap S \vert A \in \widehat{F} \}$. Then let $K(x, dy)$ and $Q(x, dy)$ be measurable kernels on $(\widehat{S}, \widehat{F})$ and $(S,F)$ with stationary distributions $\mu$ and $\nu$. Again, the goal will be to describe the mixing properties of $Q$ in terms of the mixing properties of $K$, using spectral information. Although this setup is quite general, and much of the work goes through in greater generality, we will assume that $S$ and $\widehat{S}$ are Lebesgue-measurable subsets of $\mathbb{R}^{n}$. We will write $dx$ for a reference measure on $\widehat{S}$, and we will also assume that $S$ has nonzero measure under $dx$. In particular, if $S$ is a submanifold of a manifold $\widehat{S}$, we allow $\widehat{S}$ to have positive codimension in $\mathbb{R}^{n}$, but don't allow $S$ to have positive codimension in $\widehat{S}$. This zero-codimension assumption cannot be dropped easily; the Markov kernel on $\widehat{S}$ will generally give no information about kernels on subsets of measure 0.  \par 
Say that a Kernel $P$ with stationary distribution $\pi$ is reversible if $\pi (dx) P(x, dy) = \pi (dy) P(y, dx)$. Note that if $P$ is reversible, it is a self-adjoint operator on $L^{2}(\pi)$, and so in particular has a real spectrum. Let $\lambda_{0}(P)$  be the infimum of the spectrum of $P$ on the orthogonal complement of $\textbf{1}$, and let $\lambda_{1}(P)$ be the supremum of this spectrum. As in the discrete case,
\begin{equation*}
\lambda_{1}(P) = \inf \left\{ \frac{(f, (I-P)f)_{\pi}}{\vert \vert f \vert \vert_{L^{2}(\pi)}^{2}} : (f,1)_{\pi} = 0, f \neq 0 \right\}
\end{equation*}
Say that a kernel $P$ is $\alpha$-lazy if we can write $P(x,dy) = \alpha \delta_{x}(dy) + (1-\alpha) \mu_{x}(dy)$, where $\delta_{x}$ is the measure concentrated at $x$ and $\mu_{x}(dy)$ is any measure. If $P$ is $\frac{1}{2}$-lazy, then $\lambda_{0}(P) \geq 0$, and so can essentially be ignored. For $\epsilon > 0$, also define $\vert \vert \gamma_{x,y} \vert \vert_{\epsilon} = \sum_{(u,v) \in \gamma_{x,y}} (k_{u}(v) \rho(u))^{- 2 \epsilon}$ Unlike the discrete case, a bound on $\lambda_{1}(P)$ doesn't immediately give a bound on the total variation distance. Instead, we have only
\begin{equation*}
\vert \vert \tau P^{t} - \pi \vert \vert_{2} \leq \vert \vert \tau - \pi \vert \vert_{2} \vert \lambda_{1}(P) \vert^{n}
\end{equation*}
It is now possible to set up the main comparison theorem. Assume that $K$ and $Q$ are $\frac{1}{2}$-lazy, and furthermore that we can write $K(x,dy) = \frac{1}{2} \delta_{x} + k_{x}(y) dy$, $Q(x,dy) = \frac{1}{2} \delta_{x} + q_{x}(y) dy$, $\mu(dy) = \rho(y) dy$ and $\nu(dy) = \tau(y)dy$ for the reference measure $dy$ on $\widehat{S}$. As in the discrete theory, the first step is to define for all functions $f$ on $S$ their extensions $\widehat{f}$ to $\widehat{S}$. Define for all $x \in \widehat{S} \backslash S$ a measure $r_{x}(a) da$ on $S$, and set $\widehat{f}(x) = \int_{S} f(a) r_{x}(a) da$. It is also necessary to define couplings $r_{x,y} (a,b) da db$ of the measures $r_{x}(a) da$ and $r_{y}(b) db$. \par 
Finally, it is necessary to define paths. This is slightly more complicated than the continuous situation. For fixed $x,y \in \widehat{S}$ and kernel $K$, let a sequence $x = v_{0}, v_{1}, \ldots, v_{k} = y$ be called a \textit{path} from $x$ to $y$ if $k_{v_{i}}(v_{i+1}) > 0$ for all $0 \leq i < k$. Say that $(x,y)$ require a path if $r_{ab}(x,y)>0$ for some pair $a,b$ with $\nu(a) k_{a}(b) > 0$, and denote by $\mathcal{P} \subset S^{2}$ the collection of pairs requiring a path. Then for fixed $x,y$, let $\Gamma_{xy}$ be the collection of paths from $x$ to $y$, and let $G: (x,y) \rightarrow \gamma_{xy} \in \Gamma_{xy}$ be a choice of a single element $\gamma_{xy} \in \Gamma_{xy}$ for each pair $(x,y) \in \mathcal{P}$. For the fixed $\gamma_{xy} \in \Gamma_{xy}$, let $\vert \gamma_{xy} \vert$ be the number of elements in $\gamma$, an let $\gamma_{xy}[i]$ be the $i$'th element. Unlike the discrete case, some regularity assumptions are also needed. \par 
Let $V = \{ (x,y,i) : (x,y) \in \mathcal{P}, 1 \leq i \leq \vert \gamma_{xy} \vert \}$. Say that $G$ satisfies the first regularity condition if the map $T(x,y,i) = (G(x,y)[i-1], G(x,y)[i], \vert G(x,y) \vert, i)$ from $V$ to $S^{2} \times \mathbb{N}^{2}$ is injective. Then, for all $b,i \in \mathbb{N}^{2}$ such that $(u,v,b,i) \in T(V)$ for some $u, v \in S^{2}$, let $W_{b,i} = \{ (u,v) : (u,v,b,i) \in T(v) \}$. Assume that $G$ satisfies the first regularity condition, and define the 1 to 1 map $H_{bi}: W_{b,i} \rightarrow S^{2}$ given by $H_{bi}(u,v) = (x,y)$ where $T(x,y,i) = (u,v,b,i)$. Say that $G$ satisfies the second regularity condition if $H_{bi}$ can be extended to a bijection of open sets with continuous partial derivatives a.e. with respect to Lebesgue measure. For the remainder of this paper, we will denote this extension by $H_{bi}$. \par 
Assuming the two regularity conditions hold, define, for all $b,i \in \mathbb{N}^{2}$ such that $(u,v,b,i) \in T(V)$ for some $u, v \in S^{2}$,  $J_{bi}(u,v)$ to be the Jacobian of the change of variables $H_{bi}(u,v) = (x,y)$. Note that these regularity conditions make the continuous comparison theorem substantially harder to use; they mean that a small change in an edge must result in only a small change of the path between its endpoints. Despite this, continuous versions of Theorems \ref{ThmSimpleTorusExample} and \ref{ThmBottleneckTorusExample} are still easy. \par 

\begin{thm} [Comparison for Chains on Continuous State Spaces] \label{ThmExtCont} 
Under the conditions described above, for all $\epsilon \in \mathbb{R}$,
\begin{equation*}
((I-K)\widehat{f}, \widehat{f})_{\mu} \leq  \mathcal{A}_{\epsilon} ((I-Q)f,f)_{\nu}
\end{equation*}
where 

\begin{align*}
\mathcal{A}_{\epsilon} &=  essup_{(u,v) \in E} \{ (q_{x}(y) \tau(u))^{-(1-2\epsilon)} \\
& \, \, \times \sum_{\gamma_{x,y} \ni (u,v)} \vert \vert \gamma_{x,y} \vert \vert_{\epsilon} k_{x}(y) \rho(x) \vert J_{xy}(u,v) \vert \} \\
&+ essup_{(u,v) \in E} \{ (q_{x}(z) \tau(u))^{-(1-2\epsilon)}  \\
& \, \, \, \, \times \sum_{\gamma_{x,z} \ni (u,v)} \vert \vert \gamma_{x,z} \vert \vert_{\epsilon} \left( \int_{\widehat{S} \backslash S} r_{y}(z) k_{x}(y) dy \right) \rho(x) \vert J_{xz}(u,v) \vert \} \\
&+ essup_{(u,v) \in E} \{ (q_{a}(b) \tau(u))^{-(1-2\epsilon)} \sum_{\gamma_{a,b} \ni (u,v)} \vert \vert \gamma_{a,b} \vert \vert_{\epsilon} (\int \int_{(\widehat{S} \backslash S)\times(\widehat{S} \backslash S)} r_{xy}(a,b) \rho(x) k_{x}(y) dx dy) \\
& \, \, \, \, \times \rho(a) \vert J_{ab}(u,v) \vert \}
\end{align*}
\end{thm}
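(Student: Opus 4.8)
The plan is to follow the proof of Theorem~\ref{ThmDirGenChain} essentially line by line, replacing sums over the state spaces by integrals against the reference measure and replacing the combinatorial step ``read off the coefficient of $(f(q)-f(r))^2$'' by an honest change of variables that produces the Jacobian factors. First, using reversibility together with the $\frac12$-lazy decompositions $K(x,dy)=\frac12\delta_x(dy)+k_x(y)\,dy$ and $Q(u,dv)=\frac12\delta_u(dv)+q_u(v)\,dv$, rewrite both sides as Dirichlet forms:
\begin{align*}
((I-K)\widehat{f},\widehat{f})_\mu &= \tfrac12\int_{\widehat{S}}\int_{\widehat{S}}|\widehat{f}(x)-\widehat{f}(y)|^2\,k_x(y)\rho(x)\,dx\,dy, \\
((I-Q)f,f)_\nu &= \tfrac12\int_S\int_S|f(u)-f(v)|^2\,q_u(v)\tau(u)\,du\,dv,
\end{align*}
the lazy part of each kernel dropping out on the diagonal. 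Split the first integral into three pieces $R_1,R_2,R_3$ according to whether both $x,y$ lie in $S$, exactly one does, or neither does, so that $((I-K)\widehat{f},\widehat{f})_\mu=\tfrac12R_1+R_2+\tfrac12R_3$, exactly as in the discrete proof.

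Next, expand each piece using the extensions, couplings and paths. In $R_1$ both arguments lie in $S$; insert the chosen path $\gamma_{xy}=G(x,y)$ and telescope $f(x)-f(y)=\sum_{i=1}^{|\gamma_{xy}|-1}\bigl(f(\gamma_{xy}[i])-f(\gamma_{xy}[i-1])\bigr)$. In $R_2$ write in addition $\widehat{f}(y)=\int_S f(z)r_y(z)\,dz$ and run a path from $x$ to $z$. In $R_3$ write both $\widehat{f}(x)$ and $\widehat{f}(y)$ in terms of the coupling $r_{xy}(a,b)\,da\,db$, using the marginal identities $\int r_{xy}(a,b)\,da=r_y(b)$ and $\int r_{xy}(a,b)\,db=r_x(a)$, and run a path from $a$ to $b$. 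At each telescoping, control the squared sum by the $\epsilon$-weighted Cauchy--Schwarz inequality used in Yuen's continuous comparison theorem \cite{Yuen01}: splitting each summand against the appropriate edge weight raised to $\pm\epsilon$ before two applications of Cauchy--Schwarz leaves a ``length'' factor $\|\gamma\|_\epsilon$ multiplying a sum over the path of terms $(f(v_{i+1})-f(v_i))^2$ carrying a residual fractional weight; the choice $\epsilon=0$ recovers the plain analogue of Theorem~\ref{ThmDirGenChain}. As in the discrete case one may assume no chosen path repeats an edge.

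The crux --- and the step I expect to be the main obstacle --- is converting each of $R_1$, $2R_2$, $R_3$, currently an integral over path endpoints of a sum over the edges of the chosen path, into an integral over a single edge variable $(u,v)\in E$ of $|f(u)-f(v)|^2$ times an explicit density. Fix the path length $b$ and the slot index $i$. The first regularity condition is precisely that the map $T(x,y,i)=(G(x,y)[i-1],G(x,y)[i],|G(x,y)|,i)$ is injective, so the endpoints $(x,y)$ of a path through a given edge $(u,v)$ at position $i$ with length $b$ are recovered as $H_{bi}(u,v)$; the second regularity condition lets us treat $H_{bi}$ as a diffeomorphism of open sets, so the substitution $(x,y)\mapsto(u,v)$ is legitimate and contributes the Jacobian $|J_{bi}(u,v)|=|J_{xy}(u,v)|$. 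Performing this substitution term by term and summing the resulting densities over all admissible $(b,i)$ expresses $R_1$, $2R_2$ and $R_3$, respectively, as $\int_E|f(u)-f(v)|^2$ against a density of the form $\sum_{\gamma_{x,y}\ni(u,v)}\|\gamma_{x,y}\|_\epsilon\,k_x(y)\rho(x)\,|J_{xy}(u,v)|$; then the same with $k_x(y)$ replaced by $\int_{\widehat{S}\setminus S}r_y(z)k_x(y)\,dy$; and then with the further integration $\int\!\!\int_{(\widehat{S}\setminus S)^2}r_{xy}(a,b)\rho(x)k_x(y)\,dx\,dy$ carrying the $\rho(a)$ and $|J_{ab}(u,v)|$ factors.

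Finally, assemble. The density of $|f(u)-f(v)|^2$ in $2((I-Q)f,f)_\nu$ is $q_u(v)\tau(u)$, so dividing each of the three densities above by $q_u(v)\tau(u)$ --- which, together with the residual fractional weights absorbed from the $\epsilon$-weighted Cauchy--Schwarz step, produces the exponent $-(1-2\epsilon)$ in $\mathcal{A}_\epsilon$ --- and taking the essential supremum over $(u,v)\in E$ gives exactly the constant $\mathcal{A}_\epsilon$ in the statement, hence $((I-K)\widehat{f},\widehat{f})_\mu\le\mathcal{A}_\epsilon\,((I-Q)f,f)_\nu$. The continuous analogues of Theorems~\ref{ThmSimpleTorusExample} and~\ref{ThmBottleneckTorusExample} then follow by choosing the measures, couplings and paths just as in the discrete constructions, the regularity conditions being easy to verify there since a small perturbation of an edge perturbs the path between its endpoints only slightly.
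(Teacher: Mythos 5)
Your proposal follows the same route as the paper: decompose $((I-K)\widehat{f},\widehat{f})_\mu$ into $\tfrac12R_1+R_2+\tfrac12R_3$ according to whether $x,y$ lie in $S$, use Cauchy--Schwarz with the extension kernel $r_y$ and the coupling $r_{xy}$ to reduce $R_2$ and $R_3$ to integrals over $S\times S$ of $(f(x)-f(z))^2$ against a modified kernel, and then apply Yuen's continuous path/Jacobian argument edge by edge. The only difference is one of exposition: the paper compresses the change-of-variables and $\epsilon$-weighted Cauchy--Schwarz step for each $R_i$ into a citation of Yuen's Theorem 3.2, whereas you unwind that step, but the underlying argument and decomposition are identical.
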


\begin{proof}

Start by writing
\begin{align*}
((I-K)\widehat{f}, \widehat{f})_{\mu} &= \frac{1}{2} \int \int_{\widehat{S} \times \widehat{S}} (\widehat{f}(x) - \widehat{f}(y))^{2} \rho(x) k_{x}(y) dx dy \\
&= \frac{1}{2} \int \int_{S \times S} (f(x) - f(y))^{2} \rho(x) k_{x}(y) dx dy \\
&+ \int \int_{S \times \widehat{S} \backslash S} (f(x) - \widehat{f}(y))^{2} \rho(x) k_{x}(y) dx dy \\
&+ \frac{1}{2} \int \int_{(\widehat{S} \backslash S) \times (\widehat{S} \backslash S)} (\widehat{f}(x) - \widehat{f}(y))^{2} \rho(x) k_{x}(y) dx dy \\
&\equiv \frac{1}{2} R_{1} + R_{2} + \frac{1}{2} R_{3}
\end{align*}

The goal is to compare this to $((I-Q)f,f)_{\nu} = \frac{1}{2} \int \int_{S \times S} (f(x) - f(y))^{2} \tau(x) q_{x}(y) dx dy$. $R_{1}$ is bounded exactly as in Theorem 3.2 of \cite{Yuen00}:

\begin{equation} \label{EqR1C}
\begin{aligned}
\frac{1}{2}R_{1} &= \frac{1}{2} \int \int_{S \times S} (f(x) - f(y))^{2} \rho(x) k_{x}(y) dx dy \\
&\leq essup_{(u,v) \in E} \{ (q_{x}(y) \tau(u))^{-(1-2\epsilon)} \\
& \, \, \times \sum_{\gamma_{x,y} \ni (u,v)} \vert \vert \gamma_{x,y} \vert \vert_{\epsilon} k_{x}(y) \rho(x) \vert J_{xy}(u,v) \vert \} ((I-Q)f,f)_{\nu}
\end{aligned}
\end{equation}
\par 

Next, we bound $R_{2}$ by writing
\begin{align*}
R_{2} &= \int \int_{S \times \widehat{S} \backslash S} (f(x) - \widehat{f}(y))^{2} \rho(x) k_{x}(y) dx dy \\
&= \int \int_{S \times \widehat{S} \backslash S} (f(x) - \int_{z \in S} r_{y}(z) f(z) dz)^{2} \rho(x) k_{x}(y) dx dy \\
&= \int \int_{S \times \widehat{S} \backslash S} ( \int_{z \in S} r_{y}(z) (f(x) - f(z)) dz)^{2} \rho(x) k_{x}(y) dx dy \\
&\leq  \int \int_{S \times \widehat{S} \backslash S} \int_{z \in S}  (f(x) - f(z))^{2} r_{y}(z) dz \rho(x) k_{x}(y) dx dy \\
&= \int \int_{S \times S} (f(x) - f(z))^{2} \rho(x) ( \int_{\widehat{S} \backslash S} r_{y}(z) k_{x}(y) dy) dx dz
\end{align*}

This last term is bounded exactly as in Theorem 3.2 of \cite{Yuen00}, with $k_{x}(z)$ replaced by $( \int_{\widehat{S} \backslash S} r_{y}(z) k_{x}(y) dy)$: 

\begin{equation}\label{EqR2C}
\begin{aligned}
R_{2} &\leq essup_{(u,v) \in E} \{ (q_{x}(z) \tau(u))^{-(1-2\epsilon)}  \\
& \, \, \, \, \times \sum_{\gamma_{x,z} \ni (u,v)} \vert \vert \gamma_{x,z} \vert \vert_{\epsilon} \left( \int_{\widehat{S} \backslash S} r_{y}(z) k_{x}(y) dy \right) \rho(x) \vert J_{xz}(u,v) \vert \} ((I-Q)f,f)_{\nu}
\end{aligned}
\end{equation}

Finally, we bound $R_{3}$ by writing 

\begin{align*}
R_{3} &= \int \int_{(\widehat{S} \backslash S) \times (\widehat{S} \backslash S)} (\widehat{f}(x) - \widehat{f}(y))^{2} \rho(x) k_{x}(y) dx dy \\
&= \int \int_{(\widehat{S} \backslash S) \times (\widehat{S} \backslash S)} \left( \int_{a \in S} f(a) r_{x}(a) da - \int_{b \in S} f(b) r_{y}(b) db \right)^{2} \rho(x) k_{x}(y) dx dy \\ 
&= \int \int_{(\widehat{S} \backslash S) \times (\widehat{S} \backslash S)} \left( \int_{a,b \in S} (f(a) - f(b)) r_{x,y}(a,b) da db \right)^{2} \rho(x) k_{x}(y) dx dy \\ 
&\leq \int \int_{(\widehat{S} \backslash S) \times (\widehat{S} \backslash S)} \int_{a,b \in S} (f(a) - f(b))^{2} r_{x,y}(a,b) da db \rho(x) k_{x}(y) dx dy \\
&= \int \int_{S \times S} (f(x) - f(y))^{2} \left( \int \int_{(\widehat{S} \backslash S)\times(\widehat{S} \backslash S)} r_{xy}(a,b) \rho(x) k_{x}(y) dx dy \right) da db
\end{align*}

This last term is bounded exactly as in Theorem 3.2 of \cite{Yuen00}, with $\rho(a) k_{a}(b)$ replaced by $(\int \int_{(\widehat{S} \backslash S)\times(\widehat{S} \backslash S)} r_{xy}(a,b) \rho(x) k_{x}(y) dx dy)$:

\begin{equation} \label{EqR3C}
\begin{aligned}
R_{3} &\leq essup_{(u,v) \in E} \{ (q_{a}(b) \tau(u))^{-(1-2\epsilon)} \sum_{\gamma_{a,b} \ni (u,v)} \vert \vert \gamma_{a,b} \vert \vert_{\epsilon} (\int \int_{(\widehat{S} \backslash S)\times(\widehat{S} \backslash S)} r_{xy}(a,b) \rho(x) k_{x}(y) dx dy) \\
& \, \, \, \, \times \rho(a) \vert J_{ab}(u,v) \vert \} ((I-Q)f,f)_{\nu}
\end{aligned}
\end{equation}

The theorem follows by combining inequalities \eqref{EqR1C}, \eqref{EqR2C} and \eqref{EqR3C}.
\end{proof}

\section{Applications to the Spectral Profile} \label{SecSpecProf}

In this section, we prove Theorem \ref{ThmProfileTorusExample} using the techniques found in \cite{GMT06}. First, we recall the notation in that paper. For $S \subset \Omega$, let $c_{0}(S) = \{ f \, : \, f \geq 0, \sup (f) \subset S, f \neq const \}$. Then define, for kernel $Q$ with stationary distribution $\nu$, 

\begin{equation*}
\lambda(S) = \inf_{f \in c_{0}(S)} \frac{\mathcal{E}_{Q}(f,f)}{V_{\nu}(f)}
\end{equation*}

And let $\nu_{\min} = \min_{\omega \in \Omega} \nu(\omega)$. Then define the \textit{spectral profile} associated with $Q$ by:

\begin{equation*}
\Lambda(r) = \inf_{\nu_{\min} \leq \nu(S) \leq r} \lambda(S)
\end{equation*}

Define the spectral profile $\widehat{\Lambda}$ associated with $K$ analogously. The main use of this definition in this context is through the following immediate consequence of Corollary 2.1 of \cite{GMT06}:

\begin{thm}[Spectral Profile Bound] \label{MainSpectralBound} Fix $\epsilon > 0$ and let $X_{t}$ be a $\frac{1}{2}$-lazy, reversible chain with associated spectral profile $\Lambda$. Then for $t > \int_{4 \nu_{\min}}^{4 \epsilon^{-1}} \frac{2}{r \Lambda(r)} dr$,
\begin{equation*}
\vert \vert \mathcal{L}(X_{t}) - \nu \vert \vert_{TV} \leq \epsilon
\end{equation*}
\end{thm}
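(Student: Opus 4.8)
The plan is to obtain this directly from Corollary 2.1 of \cite{GMT06}, which controls the $L^2$-mixing of a reversible chain through its spectral profile. The first step is to record the precise statement of that corollary in our notation. There the relevant functional is exactly the $\lambda(S) = \inf_{f \in c_0(S)} \mathcal{E}_Q(f,f)/V_\nu(f)$ and the profile $\Lambda(r) = \inf_{\nu_{\min} \le \nu(S) \le r} \lambda(S)$ defined above, so the only notational matching needed is the identification $\pi_* = \nu_{\min}$. With this in place, Corollary 2.1 of \cite{GMT06} says that for a reversible $\frac{1}{2}$-lazy kernel the density $h_t(y) = \mathcal{L}(X_t)(y)/\nu(y)$ satisfies $\| h_t - 1 \|_{2,\nu} \le \epsilon$ (equivalently a bound on the $\chi^2$-distance) as soon as $t > \int_{4\nu_{\min}}^{4\epsilon^{-1}} \frac{2}{r\Lambda(r)}\,dr$.

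Next I would check that our standing hypotheses are exactly those under which \cite{GMT06} proves the corollary: finite state space, reversibility, and $\frac{1}{2}$-laziness. The $\frac{1}{2}$-laziness is the essential one here, since it forces the spectrum into $[0,1]$, so that the decay of $\| h_t - 1 \|_{2,\nu}^2$ is driven entirely by the part of the spectrum that the profile $\Lambda$ controls, with no correction from a negative eigenvalue. I expect this bookkeeping to be the main, though minor, obstacle, because several results in \cite{GMT06} are phrased for continuous-time chains, and the passage to a discrete-time $\frac{1}{2}$-lazy chain is precisely what produces the factors of $2$ appearing in the integrand $\frac{2}{r\Lambda(r)}$; I would spell that reduction out rather than leave it implicit.

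Finally I would pass from the $L^2$ bound to the total-variation bound using the elementary inequality $2\,\| \mathcal{L}(X_t) - \nu \|_{TV} = \| h_t - 1 \|_{1,\nu} \le \| h_t - 1 \|_{2,\nu}$, so that $\| h_t - 1 \|_{2,\nu} \le \epsilon$ yields at once $\| \mathcal{L}(X_t) - \nu \|_{TV} \le \epsilon$ (with slack). Since \cite{GMT06} in fact records a total-variation form of the bound directly, in practice this last step reduces to quoting the appropriate line of that paper, and the theorem as stated is essentially a transcription of their Corollary 2.1 into the notation of Section~\ref{SecSpecProf}.
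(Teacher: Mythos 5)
Your proposal is correct and matches the paper's treatment: the paper states this theorem as an "immediate consequence of Corollary 2.1 of \cite{GMT06}" without further proof, and your plan — transcribe that corollary, verify the reversibility and $\tfrac{1}{2}$-laziness hypotheses, and pass from $\ell^2$ to total variation via $2\|\mathcal{L}(X_t)-\nu\|_{TV}=\|h_t-1\|_{1,\nu}\le\|h_t-1\|_{2,\nu}$ — is exactly what that citation implicitly invokes. Your added care about the discrete-time versus continuous-time phrasing in \cite{GMT06} (the source of the factor $2$ in the integrand) is a sensible point to spell out, but it is the same argument.
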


We will use this bound with the following lemma:

\begin{lemma} [Comparison for Spectral Profile] \label{LemmaSpecProfComp}

Let $M$ be a matrix with nonnegative entries such that $Mf \in \mathbb{R}^{\widehat{\Omega}}$ is an extension of $f$ for all $f \in \mathbb{R}^{\Omega}$. Assume that
\begin{equation*}
\mathcal{E}_{Q}(f,f) \geq \mathcal{A} \mathcal{E}_{K}(Mf, Mf)
\end{equation*}
Furthermore, set $C_{1} = \sup_{x \in \Omega} \frac{\nu(x)}{\mu(x)}$. Finally, for $S \subset \Omega$, define $\widehat{S} \subset \widehat{\Omega}$ to be the support of $M \textbf{1}_{S}$ and $C_{2} = \sup_{S \subset \Omega} \frac{\nu(S)}{\mu(\widehat{S})}$. Then

\begin{equation*}
\Lambda(r) \geq \frac{\mathcal{A}}{C_{1}} \widehat{\Lambda}(C_{2} r)
\end{equation*}
\end{lemma}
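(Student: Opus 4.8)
The plan is to take an arbitrary competitor $f$ for the local spectral gap $\lambda(S)$, push it forward through $M$ to a function $\widehat f = Mf$ living on the enlarged support $\widehat S$, and then combine the hypothesized Dirichlet-form inequality with the variance comparison of Lemma~\ref{LemmaVarLogSobComp}. Concretely, fix $S \subset \Omega$ with $\nu_{\min} \le \nu(S) \le r$ (so in particular $S \neq \emptyset$) and fix $f \in c_0(S)$, and set $\widehat f = Mf$.

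The first step is to verify that $\widehat f \in c_0(\widehat S)$. Nonnegativity of $\widehat f$ is immediate from $f \ge 0$ and the nonnegativity of the entries of $M$. Since $Mf$ is by hypothesis an extension of $f$, it agrees on $\Omega$ with the non-constant function $f$ and is therefore itself non-constant. For the support condition, observe that if the $x$-th coordinate of $M\mathbf{1}_S$ vanishes then, all entries $M_{xy}$ being nonnegative, $M_{xy} = 0$ for every $y \in S$; as $f$ is supported on $S$ this forces $(Mf)(x) = \sum_{y \in S} M_{xy} f(y) = 0$, so the support of $\widehat f$ is contained in $\widehat S$, the support of $M\mathbf{1}_S$. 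This is the one place where nonnegativity of $M$ is essential, and I expect it to be the main (mild) obstacle.

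The second step is a short chain of inequalities. The hypothesis gives $\mathcal{E}_Q(f,f) \ge \mathcal{A}\, \mathcal{E}_K(\widehat f,\widehat f)$, and Lemma~\ref{LemmaVarLogSobComp}, applied with its constant equal to $C_1$, gives $V_\nu(f) \le C_1 V_\mu(\widehat f)$, where $V_\mu(\widehat f) > 0$ since $\widehat f$ is non-constant. Dividing yields
\[
\frac{\mathcal{E}_Q(f,f)}{V_\nu(f)} \;\ge\; \frac{\mathcal{A}}{C_1}\,\frac{\mathcal{E}_K(\widehat f,\widehat f)}{V_\mu(\widehat f)} \;\ge\; \frac{\mathcal{A}}{C_1}\,\widehat\lambda(\widehat S),
\]
where $\widehat\lambda(\widehat S) = \inf_{g \in c_0(\widehat S)} \mathcal{E}_K(g,g)/V_\mu(g)$ is the analogue for $K$ of $\lambda$, and the last inequality uses $\widehat f \in c_0(\widehat S)$. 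Since $\widehat S$ depends only on $S$ and not on the choice of $f$, taking the infimum over $f \in c_0(S)$ gives $\lambda(S) \ge (\mathcal{A}/C_1)\,\widehat\lambda(\widehat S)$.

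The final step is the measure bookkeeping that lets us replace $\widehat\lambda(\widehat S)$ by $\widehat\Lambda(C_2 r)$. Because the $x$-th coordinate of $M\mathbf{1}_S$ equals $1$ for $x \in S$ (the extension property), we have $S \subseteq \widehat S$, so $\widehat S$ is nonempty and $\mu(\widehat S) \ge \mu_{\min}$; and by the definition of $C_2$ together with $\nu(S) \le r$ one has $\mu(\widehat S) \le C_2 r$. Hence $\widehat S$ lies in the range over which the infimum defining $\widehat\Lambda(C_2 r)$ is taken, so $\widehat\lambda(\widehat S) \ge \widehat\Lambda(C_2 r)$ and therefore $\lambda(S) \ge (\mathcal{A}/C_1)\,\widehat\Lambda(C_2 r)$. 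Since this holds for every $S$ with $\nu_{\min} \le \nu(S) \le r$, taking the infimum over such $S$ gives $\Lambda(r) \ge (\mathcal{A}/C_1)\,\widehat\Lambda(C_2 r)$, as claimed. The computational content is minimal; the only points requiring care are the verification that the transported function remains in $c_0(\widehat S)$ and the check that $\mu(\widehat S)$ falls in the window $[\mu_{\min}, C_2 r]$ so that the two nested infima line up.
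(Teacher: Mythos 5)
Your proof takes the same route as the paper's, which is considerably terser: the paper simply asserts the support containment and closes with ``the result follows immediately,'' so the care you take to verify $Mf \in c_0(\widehat S)$ (nonnegativity from $M \ge 0$, non-constancy from the extension property, support containment from the observation that a vanishing coordinate of $M\mathbf{1}_S$ forces the corresponding row of $M$ to vanish on $S$) and to check that $\mu(\widehat S)$ lies in the right window is exactly what a complete argument needs. One point deserves flagging, though: the measure bookkeeping you spell out does not actually follow from $C_2$ as written. From $C_2 = \sup_S \nu(S)/\mu(\widehat S)$ one gets $\mu(\widehat S) \ge \nu(S)/C_2$, a lower bound, whereas your argument (correctly) needs the upper bound $\mu(\widehat S) \le C_2\,\nu(S) \le C_2 r$ to place $\widehat S$ in the range of the infimum defining $\widehat\Lambda(C_2 r)$. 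That upper bound requires $C_2 = \sup_S \mu(\widehat S)/\nu(S)$; the ratio in the lemma statement is inverted. Your chain of inequalities is exactly right once the definition is corrected, and in fact your more careful write-up is what exposes this typo, which the paper's own one-line conclusion never surfaces.
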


\begin{proof}

For all $f \in c_{0}(S)$, 

\begin{align*}
\lambda(S) &\geq \frac{\mathcal{E}_{Q}(f,f)}{V_{\nu}(f)} \\
& \geq \frac{ \mathcal{A} \mathcal{E}_{K}(Mf, Mf)}{C_{1} V_{\mu}(Mf)}
\end{align*}

But by assumption, the support of $Mf$ is contained in $\widehat{S}$. Thus,

\begin{align*}
\lambda(S) \geq \frac{\mathcal{A}}{C_{1}} \widehat{\lambda}(\widehat{S})
\end{align*}

The result follows immediately.
\end{proof}

We will now use this lemma along with Theorems \ref{MainSpectralBound} and \ref{ThmSimpleTorusExample} to prove Theorem \ref{ThmProfileTorusExample}. The distributions and flow will be as in the proof of Theorem \ref{ThmSimpleTorusExample}; it is easy to check that, in the notation of Lemma 14, $C_{2} \leq 4$ in this example. Thus, the only missing ingredient is a bound on the spectral profile $\widehat{\Lambda}$ associated with simple random walk on the torus. By remark 6 following Theorem 1.2 of \cite{DiSa94}, the simple random walk on the torus has a property known as $(\frac{1}{2}, 2)$ moderate growth (see that paper for a definition of \textit{moderate growth}). By Lemma 5.3 of \cite{DiSa96b}, this walk satisfies what is known as a local Poincare inequality, with constant $8$ (again, see that paper for a definition of \textit{local Poincare inequality}). We don't use these two properties directly, but combining them with the inequality following equation 4.3 of \cite{GMT06}, the spectral profile of the random walk on the torus satisfies the inequality
\begin{equation*}
\widehat{\Lambda}(r) \geq \left( \frac{8}{27 r} - 1 \right) \frac{1}{2n^{2}}
\end{equation*}
Thus, by Lemma \ref{LemmaSpecProfComp} and the comments immediately following it, along with Theorem \ref{ThmSimpleTorusExample}, the spectral profile of the walk on the torus with holes satisfies
\begin{equation*}
\Lambda(r) \geq \frac{9}{4 n^{2}} \left( \frac{2}{27 r} - 1 \right)
\end{equation*}
Theorem \ref{ThmProfileTorusExample} follows immediately from this bound and Theorem \ref{MainSpectralBound}.

\bibliographystyle{plain}
\bibliography{DerangementsBib}

\end{document}